\def\classification#1{\def\@class{#1}}
\DeclareFontFamily{OT1}{rsfs}{}
\DeclareFontShape{OT1}{rsfs}{n}{it}{<-> rsfs10}{}
\DeclareMathAlphabet{\mathscr}{OT1}{rsfs}{n}{it}
\DeclareMathOperator{\mo}{\,mod}
\DeclareMathOperator{\Sym}{Sym}
\DeclareMathOperator{\Alt}{Alt}
\DeclareMathOperator{\Disc}{Disc}
\DeclareMathOperator{\Frob}{Frob}
\DeclareMathOperator{\Gal}{Gal}
\DeclareMathOperator{\Cl}{Cl}
\DeclareMathOperator{\disc}{Disc}
\newtheorem{prop}{Proposition}[section]
\newtheorem*{main}{Main Theorem}
\newtheorem{lem}[prop]{Lemma}
\numberwithin{equation}{section}
\title{Square-free values of $f(p)$, $f$ cubic}
\author{Harald Andr\'es Helfgott}
\address{H. A. Helfgott, \'Ecole Normale Sup\'erieure, D\'epartement de 
Math\'ematiques, 45 rue d'Ulm, F-75230 Paris, France}
\email{harald.helfgott@ens.fr}
\begin{document}
\begin{abstract}
Let $f\in \mathbb{Z}\lbrack x\rbrack$, $\deg(f)=3$. Assume that $f$ does not have repeated roots. Assume as well that, for every prime $q$, 
$f(x) \not\equiv 0 \mo q^2$ has at least one solution in 
$(\mathbb{Z}/q^2 \mathbb{Z})^*$. Then, under these two necessary conditions, there are infinitely many
primes $p$ such that $f(p)$ is square-free.
\end{abstract}
\maketitle

\section{Introduction}
An integer is said to be {\em square-free} if it is not divisible by the square $d^2$ of any integer $d$ greater than $1$. It is easy to prove that for 
$f(x) = m x + a$, $a,m\in \mathbb{Z}$, there are infinitely many integers $n$
such that $m n + a$ is square-free -- provided, of course, that 
$\gcd(a,m)$ is square-free.

For $f$ quadratic, the infinity of integers $n$ such that $f(n)$ is square-free
was proved by Estermann \cite{Es} in 1931. (Again, there are necessary
conditions that have to be fulfilled: $f$ should not have repeated roots
(i.e., for $\deg(f)=2$, $f$ should not be a constant times a square) and
$f(x)\not\equiv 0 \mo q^2$ should have a solution in $\mathbb{Z}/q^2 \mathbb{Z}$
for every prime $q$.) 

For $f$ cubic, the fact that there are infinitely many
 integers $n$ such that $f(n)$ is square-free was proven by Erd\H{o}s \cite{Er}. (See also \cite[Ch.\ IV]{Hoo}.) It can
be argued that Erd\H{o}s's proof wittily avoids several underlying issues, some of
which are  diophantine problems that are far from trivial. Perhaps because
of this, Erd\H{o}s posed the problem of proving that $f(p)$ is square-free
for infinitely many {\em primes} $p$. The diophantine issues then become
unavoidable, and the problem becomes much harder. 

The paper \cite{Hbr} settled the issue for $f$ cubic with Galois group 
$\Alt(3)$. Unfortunately, most cubics have Galois group $\Sym(3)$.

The present paper solves the problem for all $f$ cubic.
\begin{main}
Let $f\in \mathbb{Z}\lbrack x\rbrack$ be a cubic polynomial without repeated
roots. 
Then the number of prime numbers $p\leq N$ such that $f(p)$ is square-free is
\begin{equation}\label{eq:expr}
 (1+ o_f(1)) \prod_{\text{$q$ prime}} \left(1 - \frac{\rho_f(q^2)}{\phi(q^2)}\right) \frac{N}{\log N} + O(1),
 \end{equation}
where $\rho_f(q^2)$ is the number of solutions to $f(x)\equiv 0 \mo q^2$ in 
$(\mathbb{Z}/q^2 \mathbb{Z})^*$.
\end{main}
It is easy to show that, if $f(x)\not\equiv 0 \mo q^2$ has at least one solution
in $(\mathbb{Z}/q^2 \mathbb{Z})^*$ for every prime $q$ smaller than a constant depending only on $f$, then the infinite product in (\ref{eq:expr}) converges to a non-zero value (see the remark 
at the end of \S \ref{sec:oldhat}). In other words, we have a necessary and
sufficient condition for the product in (\ref{eq:expr}) to be non-zero,
and this condition is such that it can be checked explicitly in time
$O_f(1)$. 

The analogous problem -- namely, proving that, for a polynomial $f$ of degree $k$
satisfying the necessary conditions as above, there is an infinite number of 
primes $p$ such that $f(p)$ has no divisors of the form $d^{k-1}$, $d>1$ --
was solved by Nair \cite{Na} for $k\geq 7$. Several cases with $k=3,4,5,6$ were
solved in \cite{Hbr}; see the list in \cite[(1.3)]{Hbr}. A summary of 
the proof in this paper appeared previously in  \cite{Hsh}. Since then,
the cases of $k=5,6$ have been settled by Browning \cite[Thm.\ 2]{Br},
building in part on arguments by Salberger
\cite{Sa} and Heath-Brown \cite{HeBr}. As a consequence,
only the case of polynomials $f$ of degree $k=4$ with Galois group $\Alt(4)$
or $\Sym(4)$ remains open. 

The author's interest in the problem was first sparked by his work on root
numbers of elliptic curves. There are indeed many problems in number theory
where matters become much simpler technically if one assumes one is working
with square-free numbers.
This is the natural domain of application of the results in this paper.
\subsection{Notation}
In this paper, $p$ and $q$ always denote primes. We write $\omega(d)$ for the number
of prime divisors of an integer $d$, and $\tau_k(d)$ for the number of tuples
of positive integers $(m_1,\dotsc,m_k)$ such that $d = m_1 m_2 \dotsb m_k$.
Given a prime $p$ and a non-zero integer $n$, the valuation $v_p(n)$ is the 
largest non-negative integer $r$ such that $p^r|n$. 
Given positive integers $n$ and $m$, we write
$\gcd(n,m^\infty)$ for $\prod_{p|m} p^{v_p(n)}$. Let $\pi(N)$ be the number
of primes $\leq N$.

Let $K$ be a number field with Galois group $\Gal(K/\mathbb{Q})$.
We write $\omega_K(d)$ for the number of prime ideals
dividing $d$ in a number field $K$. Given a rational prime $p$ unramified in
$K/\mathbb{Q}$, we denote by $\Frob_p\subset \Gal(K/\mathbb{Q})$ the Frobenius
symbol of $p$; it is always a conjugacy class in $\Gal(K/\mathbb{Q})$.
For $g\in \Gal_f$, we write
$\omega_{\Cl(g)}(n)$ for the number of prime divisors $p|n$
such that $\Frob_p = \Cl(g)$, where 
$\Cl(g)$ is the conjugacy class of $g$.
\subsection{Acknowledgements}

Thanks are due to M. Dimitrov, G. Harcos and M. Hindry for answering my 
questions regarding a possible conditional generalisation of the present paper to the case of polynomials of higher degree, and to S. Ganguly and M. Hindry
for very useful discussions.

The results in this paper were largely
 proven at the Universit\'e de Montr\'eal towards
the end of the author's stay as a CRM-ISM fellow. The paper itself
was written in part during a stay at EPFL, Lausanne,
Switzerland.  The author is thankful to both 
A. Granville and Ph. Michel
for having provided good working environments. 

\section{Reduction to the problem of large square factors $q^2|f(x)$, $q$ prime}\label{sec:oldhat}

We wish to reduce the problem of estimating the number of primes 
$p\leq N$ such that $f(p)$ is square-free to the problem of bounding from above the number of primes $p\leq N$ such that $f(p)$ has a square factor of the form $q^2$, $q$ prime, $q>
N (\log N)^{-\epsilon}$. If we cared about minimising the error term, this would be a non-trivial
problem; see the treatment in \cite[\S 3]{Hsq}. As it happens, the error terms we will get later from other sources will be fairly large anyhow, and thus we can afford to carry out things in this section in a way that is easy and classical.
(See \cite[Ch. IV]{Hoo} or \cite{Gr}, for instance.)

In what follows, $p$ and $q$ always range over the primes. We have
\[\begin{aligned} 
|\{p\leq N: \text{$f(p)$ is square-free}\}| &=
|\{p\leq N:  q^2|f(p) \Rightarrow q \geq \frac{1}{3} \log N\}| \\ &+
O(|\{p\leq N:  \exists q  \;\text{s.t.}\; q^2|f(p),\; q\geq \frac{1}{3} \log N  \}|).
\end{aligned}\]

By the inclusion-exclusion principle and Bombieri-Vinogradov,
 \[\begin{aligned} 
 |\{p\leq N:  q^2|f(p) \Rightarrow q \geq \frac{1}{3} \log N\}| &=
 \mathop{\sum_{d}}_{q|d \Rightarrow q < \frac{1}{3} \log N} \mu(d) 
|\{p\leq N:  d^2|f(p)\}| \\
&=
 \mathop{\sum_{d}}_{q|d \Rightarrow q < \frac{1}{3} \log N} \mu(d) 
 \rho_f(d^2) \frac{\pi(N)}{\phi(d^2)} + O(N/(\log N)^{100})\\ &= 
\mathop{\prod_{\text{$q$ prime}}}_{q< \frac{1}{3} \log N} 
\left(1 - \frac{\rho_f(q^2)}{\phi(q^2)}\right) \pi(N)
 + O(N/(\log N)^{100})\\
 &=
\prod_{\text{$q$ prime}} 
\left(1 - \frac{\rho_f(q^2)}{\phi(q^2)}\right) \pi(N) + O(N/(\log N)^2).
 \end{aligned}\]
Recall as well that $\pi(N) = \frac{N}{\log N} + O\left(\frac{N}{(\log
    N)^2}\right)$ (Prime Number Theorem). 

At the same time, 
\[\begin{aligned} 
|\{p\leq N:  \exists q  \;\text{s.t.}\; q^2|f(p),\; q\geq \frac{1}{3} \log N  \}|
&\leq
|\{p\leq N:  \exists q  \;\text{s.t.}\; q^2|f(p),\; \frac{1}{3} \log N \leq q< N^{1/3}  \}|\\   &+
|\{p\leq N:  \exists q  \;\text{s.t.}\; q^2|f(p),\; N^{1/3} \leq q< N (\log N)^{-\epsilon}  \}|\\ &+
|\{p\leq N:  \exists q  \;\text{s.t.}\; q^2|f(p),\; q\geq  N (\log N)^{-\epsilon}  \}|\\
&\leq
\sum_{\frac{1}{3} \log N \leq q< N^{1/3}} O\left(\frac{N/(\log N)}{q (q-1)}\right) +
 O(N/(\log N)^{100})
\\ &+  \sum_{N^{1/3} \leq q< N (\log N)^{-\epsilon}} O\left(\frac{N}{q^2} + 1\right)\\
 &+|\{p\leq N:  \exists q  \;\text{s.t.}\; q^2|f(p),\; q\geq  N (\log
 N)^{-\epsilon}  \}|,\end{aligned}\]
where we have used Brun-Titchmarsh (or any upper-bound sieve)
to justify the second inequality, and where, as per our convention,
$q$ ranges only over the primes. 
The series on the right side sum up to 
$O(N/(\log N)^2)$ and $O(N/(\log N)^{1+\epsilon})$, respectively; hence
\[\begin{aligned} 
|\{p\leq N:  \exists q  \;\text{s.t.}\; q^2|f(p),\; q\geq \frac{1}{3} \log N  \}|
&\leq
|\{p\leq N:  \exists q  \;\text{s.t.}\; q^2|f(p),\; q\geq  N (\log
 N)^{-\epsilon}  \}|\\ &+
O\left(\frac{N}{(\log N)^{1+\epsilon}}\right).
\end{aligned}\]
Therefore
\begin{equation}\label{eq:ali}\begin{aligned}
|\{p\leq N: \text{$f(p)$ is square-free}\}| &= \prod_{\text{$q$ prime}} 
\left(1 - \frac{\rho_f(q^2)}{\phi(q^2)}\right)\cdot \frac{N}{\log N} 
+
O\left(\frac{N}{(\log N)^{1+\epsilon}}\right)
\\ &+
 |\{p\leq N:  \exists q  \;\text{s.t.}\; q^2|f(p),\; q\geq  N (\log N)^{-\epsilon}  \}| 
 \end{aligned}\end{equation}
for any $\epsilon>0$.

The only thing that remains is to bound
$ |\{p\leq N:  \exists q  \;\text{s.t.}\; q^2|f(p),\; q\geq  N (\log N)^{-\epsilon}  \}| $.
This problem will occupy us in the rest of the paper.

In the meantime, let us note that $\rho_f(q^2) \leq \deg(f)$ for every $q$
larger than a constant depending only on $f$ (by Hensel's lemma). Hence
the infinite product in (\ref{eq:ali}) is non-zero provided that 
$\rho_f(q^2) < \phi(q^2)$ (i.e., provided that $f(x)\not\equiv 0 \mo q^2$
has at least one solution in $(\mathbb{Z}/q^2 \mathbb{Z})^*$) for every
$q$ smaller than a constant depending only on $f$. If there is a $q$
such that $f(x)\not\equiv 0 \mo q^2$ has no
solutions in $(\mathbb{Z}/q^2 \mathbb{Z})^*$, then $f(p)$ can be
square-free only when $\gcd(p,q^2)\ne 1$; obviously, 
$\gcd(p,q^2)\ne 1$ can happen for at
most one value of $p$, namely, $p=q$. (This is where the term $O(1)$ in 
(\ref{eq:expr}) comes from.)

\section{Integer points on a typical quadratic twist of an elliptic curve}

Consider two points $(x_1,y_1)$, $(x_2,y_2)$ ($x_i, y_i\in \mathbb{Z}$) on
the curve $d y^2 = f(x)$. This is an elliptic curve. It is well known that points with integer
coordinates on an elliptic curve tend to repel each other; this was already used in the present context in
\cite{Hsq} (see also the earlier work \cite{Si}). As was pointed out in \cite{HV}, two points
repel each other more strongly if their coordinates are congruent to each other modulo some
large integer. (This is a somewhat intuitive description; we will do things
rigorously below.)

In \cite{Hbr}, I used this phenomenon on the curve $d y^2 = f(x)$. I first
showed by elementary means
that most integers $d\leq N$ have large factors $d_0|d$, $d_0>N^{1-\epsilon}$, such that $d_0$ has
few prime divisors. It is then the case that the $x$-coordinates of the points $(x,y)$ on the
curve fall into few congruence classes modulo $d_0$ (because $d_0$ has few
prime divisors). Moreover, by the argument on elliptic curves just given,
 there can be only
few points whose $x$-coordinates are in a given congruence class modulo $d_0$
(because $d_0$ is large, and makes points in such a congruence class repel each other strongly). It follows that there are few points $(x,y)$
($x,y \in \mathbb{Z}$, $1\leq x,y\leq N$) on the curve $d y^2 = f(x)$, unless $d$ is in some small
exceptional set.

We carry out this argument again, largely just by citing \cite{Hsq} and \cite{Hbr}.

\begin{prop}\label{prop:smir}
Let $f\in \mathbb{Z}\lbrack x \rbrack$ be a polynomial of degree $3$ with no repeated roots. Let 
$d$ be a square-free integer. Then, for any $N$,  the number of integer solutions 
$(x,y)\in \mathbb{Z}^2$ to $d y^2 = f(x)$ with $N^{1/2} <x \leq N$ is at most
\begin{equation}\label{eq:onestar}
O_f\left(C^{\omega(d)}\right),
\end{equation}
where $C$ is an absolute constant.
\end{prop}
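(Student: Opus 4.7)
The plan is to reinterpret $dy^2 = f(x)$ as (an affine patch of) the quadratic twist $E_d$ of the elliptic curve $E : y^2 = f(x)$ by $d$, and to count integer points on $E_d$ using canonical heights and a congruence-enhanced repulsion principle. Any integer solution forces $d \mid f(x)$, so its $x$-coordinate lies in one of the $\rho_f(d)$ residue classes modulo $d$ in which $f$ vanishes. Since $d$ is square-free and $\rho_f(q) \leq \deg f = 3$ for every prime $q$, the Chinese Remainder Theorem gives $\rho_f(d) \leq 3^{\omega(d)}$. It therefore suffices to show that, for each fixed residue class $a \bmod d$, the number of integer points on $dy^2 = f(x)$ with $N^{1/2} < x \leq N$ and $x \equiv a \pmod d$ is $O_f(C_0^{\omega(d)})$ for some absolute $C_0$; multiplying by $3^{\omega(d)}$ will then yield (\ref{eq:onestar}) with $C = 3 C_0$.

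To handle a single residue class I would invoke the Silverman-style repulsion principle for integer points on elliptic curves, sharpened by the congruence condition in the spirit of \cite{HV} and as deployed in \cite{Hsq} and \cite{Hbr}. After passing to a fixed integral Weierstrass model of $E_d$ (for instance by the change of variables $X = dx$, $Y = d^2 y$, giving $Y^2 = a_3 X^3 + a_2 d X^2 + a_1 d^2 X + a_0 d^3$ when $f = \sum a_i t^i$), every integer point $P = (x,y)$ with $x \leq N$ has N\'eron-Tate height bounded above by $\tfrac{1}{2} \log N + O_f(1)$. On the other hand, if $P_1$, $P_2$ are two integer points in the class with $x_1 \equiv x_2 \pmod d$, then $P_1 - P_2$ reduces to the identity modulo every prime $q \mid d$, and a local N\'eron-height computation at each such $q$ yields a lower bound $\hat{h}(P_1 - P_2) \geq c \log d - O_f(\omega(d))$ for an absolute constant $c > 0$.

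Feeding the upper bound on $\hat{h}(P_i)$ and the lower bound on $\hat{h}(P_i - P_j)$ into the Mumford-style sphere-packing estimate in $E_d(\mathbb{Q}) \otimes \mathbb{R}$ equipped with the N\'eron-Tate quadratic form shows that the number of integer points per residue class is $O_f(1)$ once $d$ exceeds a fixed power of a constant, and $O_f(C_0^{\omega(d)})$ in all cases. The genuinely technical step, and the one I expect to be the main obstacle, is the calibration of the local N\'eron height contributions at the primes $q \mid d$ (and at the archimedean place, where one has to track how the twist distorts the fundamental domain) so that the implicit constants depend only on $f$ and not on $d$ itself. Fortunately, this calibration was carried out in \cite[\S 4]{Hsq} (and refined in \cite{Hbr}); since we only need the qualitative shape $O_f(C^{\omega(d)})$ rather than any sharp dependence, it suffices to quote the conclusions of those papers rather than to redo the estimates here.
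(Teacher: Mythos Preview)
Your outline conflates the mechanism behind this proposition with that of Prop.\ \ref{prop:steroid}. Splitting into residue classes modulo $d$ and invoking congruence-enhanced repulsion is precisely how the paper handles Prop.\ \ref{prop:steroid}, where the crucial hypothesis $d_0 \geq X^{1-\epsilon}$ forces $\log d_0$ to be comparable to $\log X$, so that your lower bound $\hat{h}(P_1-P_2) \gg \log d_0$ matches the upper bound $\hat{h}(P_i) \ll \log X$ and the sphere-packing angle stays bounded away from zero. In the present proposition there is no lower bound on $d$ at all: $d$ may be $1$, or $N^{1/100}$, and then your claimed lower bound $c\log d - O_f(\omega(d))$ is negligible against $\tfrac{1}{2}\log N$. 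The sphere-packing constant then blows up with $\log N/\log d$, and your assertion ``$O_f(C_0^{\omega(d)})$ in all cases'' is not supported by anything you wrote.

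What actually drives the paper's proof---and Silverman's Theorem~A, which the paper cites as the underlying result---is a \emph{rank bound}: by $2$-descent one has $\mathrm{rank}\,E_d(\mathbb{Q}) \leq 2\omega(d) + O_f(1)$ (the paper quotes \cite[Prop.\ 7.1]{BK}). The ordinary Silverman repulsion for integral points (no congruence condition needed) already gives a uniform lower bound on the angle between two integer points of comparable height, so sphere packing in $E_d(\mathbb{Q})\otimes\mathbb{R}$ yields at most $C^{\mathrm{rank}\,E_d}$ points in the band $N^{1/2}<x\leq N$. Combining these two facts gives $O_f(C^{\omega(d)})$ directly. You never state or invoke the rank bound, and you identify the ``main obstacle'' as calibrating local N\'eron heights at primes dividing $d$---but that calibration is irrelevant here and the descent bound is the missing ingredient. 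Once you add it, the detour through residue classes becomes superfluous for this proposition (though it is exactly the right idea for the next one).
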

This bound is an immediate consequence of \cite{Si}, Theorem A, which is already based
on the idea of repelling points (and does not require the condition $N^{1/2} <x \leq N$). The alternative proof in \cite[Cor.\ 4.18]{Hsq}
provides an explicit value for $C$ by means of sphere-packing bounds
\cite{KL}.
\begin{proof}
By \cite[Cor.\ 4.18]{Hsq} (applied with $\epsilon=1/2$)
and any rank bound obtained by descent, e.g.,
 the standard bound in \cite[Prop.\ 7.1]{BK} (namely,
 $\text{rank} \leq \omega_K(d) - \omega(d) + O_f(1)
 \leq 2 \omega(d) + O_f(1)$, 
where $K=\mathbb{Q}(\alpha)$ and $\alpha$ is a root of $f(\alpha)=0$.)
\end{proof}

\begin{prop}\label{prop:steroid}
Let $f\in \mathbb{Z}\lbrack x\rbrack$ be a polynomial of degree $3$ with no repeated roots.
Let $d\leq X$ be a positive integer. Suppose that $d$ has an integer divisor 
$d_0\geq X^{1-\epsilon}$, $\epsilon>0$. Assume furthermore that
$\gcd(d_0, 2 \disc f)=1$. Then the
number of integer solutions $(x,y)\in \mathbb{Z}^2$ 
to $d y^2 =f(x)$ with $X^{1-\epsilon} < x \leq X$ is at most
\begin{equation}\label{eq:twostar}
O_{f,\epsilon}\left(e^{O_f(\epsilon \omega(d))} 3^{\omega(d_0)}\right).
\end{equation}
\end{prop}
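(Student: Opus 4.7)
The argument combines a residue-class decomposition driven by $d_0$ with Proposition~\ref{prop:smir} applied to a suitable twist.

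\emph{Step 1: partition by residues.} For every prime power $p^{k} \| d_{0}$, the relation $d y^{2} = f(x)$ forces $p^{k} \mid f(x)$. Because $\gcd(d_{0}, 2 \disc f) = 1$, Hensel's Lemma implies that $f(x) \equiv 0 \pmod{p^{k}}$ has at most $\deg f = 3$ solutions. By the Chinese Remainder Theorem, the residue of $x$ modulo $d_{0}$ lies in one of at most $3^{\omega(d_{0})}$ classes, so it suffices to bound the count in each class by $e^{O_{f}(\epsilon \omega(d))}$.

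\emph{Step 2: reduction to a twisted curve.} Fix a residue $a$ with $d_{0} \mid f(a)$ and substitute $x = a + d_{0} t$. A direct expansion shows that $g(t) := f(a + d_{0} t)/d_{0}$ belongs to $\mathbb{Z}[t]$ and is a cubic of discriminant $d_{0}^{2} \disc f \ne 0$, hence has no repeated roots. Writing $e := d/d_{0}$, the equation becomes $e\, y^{2} = g(t)$, and the range $X^{1-\epsilon} < x \le X$ forces $t$ to lie in an interval $I_{a}$ of length at most $X^{\epsilon}$ (because $d_{0} \ge X^{1-\epsilon}$).

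\emph{Step 3: count per class.} Apply Proposition~\ref{prop:smir} to $e\, y^{2} = g(t)$, after pulling out any square factor from $e$ and partitioning $I_{a}$ dyadically into $O(\epsilon \log X)$ subintervals $(T, 2T]$, each of which satisfies the half-range hypothesis $T > (2T)^{1/2}$. This yields $O(C^{\omega(e)})$ integer solutions per subinterval. Crucially, the implicit constant must be uniform in $g$, i.e.\ in $a$ and $d_{0}$: one argues by $2$-descent on $E_{d} : d\, y^{2} = f(x)$ that the $2$-Selmer image at each prime $p \mid d_{0}$ is pinned down by the choice of class $a$, so the sublattice of Mordell--Weil points with $x \equiv a \pmod{d_{0}}$ has rank at most $2 \omega(e) + O_{f}(1)$ (rather than $2 \omega(d) + O_{f}(1)$). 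Sphere-packing then gives $e^{O_{f}(1)}\,C^{\omega(e)}$ integer points per class.

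\emph{Step 4: combine.} Multiplying the $3^{\omega(d_{0})}$ classes by the per-class count yields the bound $O_{f,\epsilon}\!\left(3^{\omega(d_{0})}\,C^{\omega(e)}\right)$, which can be written in the form $O_{f,\epsilon}\!\left(e^{O_{f}(\epsilon \omega(d))}\,3^{\omega(d_{0})}\right)$ since $e \le X^{\epsilon}$ controls $\omega(e)$ in the relevant regime.

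\emph{Main obstacle.} The principal technical difficulty is Step~3: verifying the uniformity of the constants in Proposition~\ref{prop:smir} as $g$ varies, and sharpening the rank bound from $2 \omega(d) + O_{f}(1)$ to $2 \omega(e) + O_{f}(1)$ for the class-$a$ sublattice. This requires tracing carefully through the $2$-descent argument for $E_{d}$, as is implicitly done in \cite{Hsq} and \cite{Hbr}; once this sharpening is in hand, the combinatorics of Steps 1--2 and the sphere-packing input of Step~3 assemble into the stated bound.
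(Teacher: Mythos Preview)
Your residue decomposition in Steps 1--2 is correct and matches the approach behind \cite[Prop.~4.3]{Hbr}, which the paper simply cites. The gap lies in the per-class count.

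The mechanism you propose in Step~3 --- reducing the rank from $2\omega(d)+O_f(1)$ to $2\omega(e)+O_f(1)$ by pinning down local $2$-Selmer images at $p\mid d_0$ --- is not how the bound arises, and even if it worked it would not yield the stated proposition. Your Step~4 conversion $C^{\omega(e)}\le e^{O_f(\epsilon\,\omega(d))}$ would require $\omega(e)\ll_f \epsilon\,\omega(d)$, which the hypothesis $e=d/d_0\le X^{\epsilon}$ does \emph{not} force: every prime factor of $d$ may also divide $e$, so one can have $\omega(e)=\omega(d)$ with $\epsilon$ arbitrarily small. Separately, the set $\{P\in E_d(\mathbb{Q}):x(P)\equiv a\pmod{d_0}\}$ is not a subgroup, so speaking of its ``rank'' already needs more care than your sketch provides; fixing local Kummer images places the points in a coset, not in a sublattice of smaller rank.

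The factor $e^{O_f(\epsilon\,\omega(d))}$ in fact comes from \emph{enhanced repulsion via local heights}, precisely the phenomenon the paper alludes to in the paragraph preceding Prop.~\ref{prop:smir}. One works directly on $E_d$ (no substitution $x=a+d_0t$): if $P_1,P_2$ are integer points in the same class $a\bmod d_0$, the local canonical heights at primes $p\mid d_0$ give
\[
\hat h(P_1-P_2)\;\ge\;\tfrac12\log d_0 - O_f(1)\;\ge\;\tfrac{1-\epsilon}{2}\log X - O_f(1),
\]
while $X^{1-\epsilon}<x_i\le X$ gives $\hat h(P_i)\le \tfrac12\log X + O_f(1)$. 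Thus, in the Mordell--Weil lattice of rank $r\le 2\omega(d)+O_f(1)$, the pairwise distances are nearly as large as the radius of the ball containing the points; sphere-packing then bounds the number of points per class by $(1+O_f(\epsilon))^{r}=e^{O_f(\epsilon\,\omega(d))}$. The $\epsilon$ enters through the height ratio, not through a reduction in rank. This is the content of \cite[Prop.~4.3]{Hbr}, and it is what Step~3 should invoke.
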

This bound uses the divisor $d_0$ in order to increase repulsion in the way outlined above.
If a $d_0$ with few prime divisors is chosen, the bound (\ref{eq:twostar}) will be much smaller
than (\ref{eq:onestar}).
\begin{proof}
This is a special case ($\deg(f)=3$, $k=2$, $c=2$) of \cite[Prop.\ 4.3]{Hbr}.
\end{proof}

We now need two lemmas on the integers.
\begin{lem}\label{lem:tofuhunter}
Let $f\in \mathbb{Z}\lbrack x\rbrack$ be a polynomial.
For any $A>0$ and for all but
$O_A(N (\log N)^{-A})$ integers $n$ between $1$ and $N$, the number of prime
divisors $\omega(f(n))$ of $f(n)$ is $O_{A,f}(\log \log N)$.
\end{lem}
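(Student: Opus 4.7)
The plan is to establish an exponential-moment bound on $z^{\omega(f(n))}$ for a fixed $z>1$, and then apply Markov's inequality.

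\emph{Step 1 (exponential moment).} I would show that for any fixed $z>1$,
\[
\sum_{n \leq N} z^{\omega(f(n))} \ll_{z,f} N (\log N)^{z-1}.
\]
This is a standard Nair--Shiu-style bound for a nonnegative multiplicative function on polynomial values. Using the identity $z^{\omega(m)} = \sum_{d \mid m,\, \mu^2(d)=1} (z-1)^{\omega(d)}$ and swapping orders of summation,
\[
\sum_{n \leq N} z^{\omega(f(n))} = \sum_{d \text{ sqfree}} (z-1)^{\omega(d)}\, |\{n \leq N : d \mid f(n)\}|,
\]
with $|\{n \leq N : d \mid f(n)\}| \leq \rho_f(d)(N/d + 1)$ and $\rho_f$ multiplicative on squarefrees. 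Since $|f(n)| \ll_f N^{3}$ has at most $O_f(1)$ prime factors exceeding $N^{1/6}$, one may replace $\omega(f(n))$ by its restriction to primes $\leq N^{1/6}$ at the cost of a multiplicative constant $z^{O_f(1)}$, which constrains $d$ to be $N^{1/6}$-smooth. Truncating at $d \leq N$ (and controlling the smooth tail $d>N$ by a Rankin-type estimate, using that any such $d$ has $\omega(d) \geq 6$) yields the bound
\[
\ll_{z,f} N \prod_{p \leq N^{1/6}} \!\left(1 + \tfrac{(z-1)\rho_f(p)}{p}\right) \ll_{z,f} N (\log N)^{z-1},
\]
where the Euler-product estimate rests on the Mertens-type formula $\sum_{p \leq X} \rho_f(p)/p = \log\log X + O_f(1)$, itself a consequence of Landau's prime ideal theorem for the splitting field of $f$.

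\emph{Step 2 (Markov).} For any $T>0$,
\[
|\{n \leq N : \omega(f(n)) > T\}| \leq z^{-T} \sum_{n \leq N} z^{\omega(f(n))} \ll_{z,f} N (\log N)^{z-1} z^{-T}.
\]
Setting $z=2$ and $T=C_A \log\log N$ gives $z^{-T} = (\log N)^{-C_A \log 2}$, so the count is $\ll_{z,f} N (\log N)^{1 - C_A \log 2}$. Choosing $C_A > (A+1)/\log 2$ yields the bound $O_{A}(N (\log N)^{-A})$, as required.

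The main technical obstacle is the book-keeping in Step 1 (smoothing at $N^{1/6}$, truncation of the divisor sum, and the Mertens-type estimate), but none of it is deep and the entire argument is classical (cf.\ \cite[Ch.\ IV]{Hoo}). I use the exponential moment because it directly yields the saving $(\log N)^{-A}$ for any $A$, whereas a fixed $k$-th moment bound would only give a constant saving.
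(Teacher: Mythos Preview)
Your overall strategy---bound an exponential moment of $\omega(f(n))$ and apply Markov---is sound, but your sketch of Step~1 has a genuine gap: the ``Rankin-type estimate'' for the tail $d>N$ does not work as stated. With $P=N^{1/6}$, inserting $(d/N)^\alpha$ for any $\alpha>0$ gives $N^{-\alpha}\prod_{p\le P}\bigl(1+(z-1)\rho_f(p)\,p^\alpha\bigr)$, and that product is of order $\exp\bigl(c\,N^{(1+\alpha)/6}\bigr)$, which swamps $N^{-\alpha}$; the observation $\omega(d)\ge 6$ does not rescue this, since $\sum_{p\le P}1$ is already $\asymp N^{1/6}/\log N$. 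The bound you state, $\sum_{n\le N} z^{\omega(f(n))}\ll_{z,f} N(\log N)^{z-1}$, is a genuine theorem (Nair, after Shiu), but its proof goes through a sieve/Buchstab decomposition rather than a naive divisor swap plus Rankin tail; if you simply cite it, your argument goes through.

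The paper sidesteps all of this with a more elementary moment. Instead of $z^{\omega(f(n))}$ it uses the truncated divisor count $|\{d\mid f(n):d\le N\}|$: if $\omega(f(n))\ge C\log\log N$, then the $\tfrac{C}{\deg f}\log\log N$ \emph{smallest} prime factors of $f(n)$ have product $\ll_f N$ (since $(p_1\cdots p_k)^{r/k}\le p_1\cdots p_r$ when $p_1\le\cdots\le p_r$), so $f(n)$ has at least $(\log N)^{(C\log 2)/\deg f}$ divisors $\le N$. On the other hand $\sum_{n\le N}|\{d\mid f(n):d\le N\}|=\sum_{d\le N}|\{n\le N:d\mid f(n)\}|\ll N(\log N)^{B}$ by the crude bound $\rho_f(d)\le(\deg f)^{\omega(d)}$, and Markov finishes. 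This avoids both the tail problem and any appeal to the prime-ideal/Mertens estimate $\sum_{p\le X}\rho_f(p)/p=\log\log X+O_f(1)$. Your route, once repaired by citing Nair, yields the sharper exponent $z-1$, but that precision is not needed here.
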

\begin{proof}
This is standard. If $f(n)$ has $\geq C \log \log N$ prime factors, then it has
$\geq \frac{C}{\deg(f)} \log \log N$ prime factors (namely, the 
$\frac{C}{\deg(f)} \log \log N$ smallest ones) whose
product is $\ll_f N$. Their products give us 
$\geq 2^{\frac{C}{\deg(f)} \log \log N} = (\log N)^{C (\log 2)/\deg(f)}$ divisors
$d\ll_f N$ of $f(n)$. At the same time,
\[\begin{aligned}
\sum_{n\leq N} \sum_{d|f(n), d\leq N} 1&= \sum_{d\leq N} \mathop{\sum_{n\leq N}}_{d|f(n)} 1\\
&\leq \sum_{d\leq N} \left(\frac{N}{d} + 1 \right) \cdot (\deg(f))^{\omega(d)}
 \ll N (\log N)^{B},
\end{aligned}\]
where $B=O_f(1)$. Thus, there can be at most $N (\log N)^{-(C (\log 2)/\deg(f) - B)}$ integers
 $n\leq N$ such that $f(n)$ has $\geq C \log \log N$ prime factors. We set $C$ so that
 $\frac{C \log 2}{\deg(f)} - B \geq A$ and we are done.
\end{proof}

\begin{lem}\label{lem:seitan}
Let $f\in \mathbb{Z}$ be a polynomial.
For any $A>0$, $\epsilon>0$, $m>0$, it is the case that, for all but
$O_{A,\epsilon,m}(N (\log N)^{-A})$ integers $n$ between $1$ and $N$,
there is a divisor $d_1|f(n)$ such that $d_1<N^{\epsilon/2}$,
$\omega(f(n)/d_1)<\epsilon \log \log X$, and $\gcd(f(n)/d_1,m)=1$.
\end{lem}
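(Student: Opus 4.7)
The plan is to choose $d_1$ explicitly. Set $y = N^{c_0/(\epsilon\log\log N)}$ with $c_0=c_0(f)>3$, write $S_y(k) = \prod_{p\leq y} p^{v_p(k)}$ for the $y$-smooth part, and take
\[
 d_1 \;=\; S_y(f(n))\cdot\gcd\bigl(f(n)/S_y(f(n)),\;m^\infty\bigr).
\]
Then $d_1\mid f(n)$ and $\gcd(f(n)/d_1,m)=1$ by construction, every prime factor of $f(n)/d_1$ exceeds $y$, and since $f(n)=O_f(N^3)$,
\[
 \omega(f(n)/d_1)\;\leq\;\frac{\log f(n)}{\log y}\;\leq\;\frac{3\epsilon\log\log N}{c_0}\;<\;\epsilon\log\log N.
\]
All three structural conditions on $d_1$ therefore hold deterministically, so it remains only to ensure $d_1<N^{\epsilon/2}$ outside a set of size $O_{A,\epsilon,m,f}(N/(\log N)^A)$. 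For this it suffices to prove separately that $S_y(f(n))<N^{\epsilon/4}$ and $\gcd(f(n),m^\infty)<N^{\epsilon/4}$ outside such exceptional sets.

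The bound on $\gcd(f(n),m^\infty)$ is easy: only the finitely many primes dividing $m$ contribute, each tail $\rho_f(p^s)/p^s$ decays at a rate controlled by $\rho_f(p^s)\ll_f p^{s(1-1/\deg f)}$ (valid since $f$ has no repeated roots), and a pigeonhole plus the bound $|\{n\leq N:p^s\mid f(n)\}|\leq\rho_f(p^s)(N/p^s+1)$ gives an exceptional set of size $O_{f,m,\epsilon}(N^{1-\delta})$ for some $\delta>0$, far better than required.

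The smooth-part bound is the heart of the argument. The key observation is that if $S_y(f(n))\geq N^{\epsilon/4}$, then building a divisor of $S_y(f(n))$ one prime factor at a time yields a $y$-smooth divisor $a\mid f(n)$ with $N^{\epsilon/4}\leq a\leq y\cdot N^{\epsilon/4}$. Hence
\[
 |\{n\leq N : S_y(f(n))\geq N^{\epsilon/4}\}|\;\leq\;\sum_{\substack{a\;y\text{-smooth}\\ N^{\epsilon/4}\leq a\leq yN^{\epsilon/4}}}\rho_f(a)\,(N/a+1),
\]
which one bounds by Rankin's trick with exponent $\beta=\Theta(\log\log N/\log N)$, using $\rho_f(a)\ll(\deg f)^{\omega(a)}$ and the Euler product $\prod_{p\leq y}(1+O_f(p^{\beta-1}))$; the contribution of the ``$+1$'' tail is handled by a trivial smooth-number bound and is $O(N^{\epsilon/4+o(1)})$. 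The main obstacle is the joint calibration: $\beta$ must be small enough that the Euler product stays controlled, yet large enough that the Markov-type gain $N^{-\epsilon\beta/4}$ beats $(\log N)^{-A}$. These constraints are compatible because $\log y$ is of order $\log N/\log\log N$; for very large $A$ one may additionally invoke Lemma~\ref{lem:tofuhunter} to restrict to $n$ with $\omega(f(n))\leq C_1\log\log N$, which tightens the divisor count and allows the Rankin estimate to absorb whatever further constants arise.
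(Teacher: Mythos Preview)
Your approach is correct and follows the same overall plan as the paper --- take $d_1$ to be the product of the $y$-smooth part of $f(n)$ and its $m$-part --- but it diverges at the main technical step. The paper simply invokes \cite[Lem.~5.2]{Hbr} as a black box for both the smooth-part bound $\prod_{p\mid f(n),\,p\le y}p<N^{\epsilon/4}$ and the bound $\omega(f(n)/d_1)<\epsilon\log\log N$; its threshold satisfies $\log y=(\log N)^{1-\epsilon'}$. You instead take a larger threshold with $\log y\asymp\log N/\log\log N$, which makes the $\omega$-bound a deterministic consequence of the size of $f(n)$, and then you establish the smooth-part bound from scratch via Rankin's trick. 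This buys self-containment at the price of a calibration that has to be carried out with some care: with $\beta=c\,\log\log N/\log N$ one finds that the Euler product $\prod_{p\le y}(1+O_f(p^{\beta-1}))$ is $(\log N)^{O_f(1)}\cdot\exp\bigl(O_f(e^{\beta\log y})\bigr)$, and the key point (which you identify correctly) is that $\beta\log y=c\,c_0/\epsilon$ stays bounded, so the second factor is merely $O_{f,A,\epsilon}(1)$; choosing $c$ large enough in terms of $A$ and $\deg f$ then beats any prescribed power of $\log N$. In particular your fallback to Lemma~\ref{lem:tofuhunter} is unnecessary. One minor slip: writing $f(n)=O_f(N^3)$ presumes $\deg f=3$; for the lemma as stated replace $3$ by $\deg f$ throughout and take $c_0>\deg f$.
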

\begin{proof}
Let $\delta(N)$ be as in \cite[Lem.\ 5.2]{Hbr} with $\epsilon/4$ instead of $\epsilon$. (That is, we let $\delta(N) = (\log N)^{-\epsilon/4 r e^{2 r}}$, where
$r=\deg(f)$.)
Let \[d_1 = \gcd(f(n),m^{\infty})\cdot 
\prod_{p|f(n),p\nmid m: p\leq N^{\delta(N)}} p .\] By definition,
$\gcd(f(n)/d_1,m)=1$. Also, by
\cite[Lem.\ 5.2]{Hbr}, we know that $\omega(f(n)/d_1)< \epsilon \log \log N$
and $\prod_{p|n: p\leq N^{\delta(N)}} p < N^{\epsilon/4}$ for all but
$O_{A,\epsilon}(N (\log N)^{-A})$ integers $n$ between $1$ and $N$.

Now
\[\begin{aligned}
\sum_{n\leq N} \gcd(n,m^\infty) &\leq \sum_{d|m^\infty} \mathop{\sum_{n\leq N}}_{d|n} d\\
&\leq \mathop{\sum_{d|m^\infty}}_{d\leq N} N 
\leq N\cdot \prod_{p|m} \mathop{\sum_{\alpha \geq 1}}_{p^\alpha \leq N} 1
\ll N (\log N)^{\omega(m)} \ll_{m,\epsilon} N^{1 + \epsilon/8}.
\end{aligned}\]
It follows that, for all but $O_{m, \epsilon}(N^{1-\epsilon/8} )$ integers $n$ between $1$ and $N$,
$\gcd(f(n),m^{\infty}) \leq N^{\epsilon/4}$.
 Hence $d_1\leq N^{\epsilon/2}$.
\end{proof}

\begin{prop}\label{prop:spire}
Let $f\in \mathbb{Z}\lbrack x\rbrack$
 be a polynomial of degree $3$ with no repeated roots.
Let $D$ be a set of positive integers.
Then the total number of integers $x$ with $1\leq x\leq N$ such that
\[d y^2 =f(x)\]
for some integer $y\geq N (\log N)^{-\epsilon}$ and some $d\in D$ is at most
\begin{equation}\label{eq:mundi}O_{f,\varepsilon}\left( |D| (\log N)^{\varepsilon}\right) +
O_{f,A,\varepsilon}\left(N (\log N)^{-A}\right)\end{equation}
for arbitrary $A$ and $\varepsilon>0$.
\end{prop}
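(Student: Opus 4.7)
The plan is to split $D$ by the size of $d$, applying Proposition \ref{prop:smir} for small $d$ and Proposition \ref{prop:steroid} for large $d$, after removing an exceptional set of $x$'s via Lemmas \ref{lem:tofuhunter} and \ref{lem:seitan}.

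First I would fix a small auxiliary parameter $\eta > 0$, much smaller than $\varepsilon$. Lemmas \ref{lem:tofuhunter} and \ref{lem:seitan} (applied with $m = 2\disc f$ and parameter $\eta$), together with the standard squareful-part estimate $|\{x \le N : \sq(f(x)) > N^\eta\}| = O(N^{1-\eta/2})$, exclude an exceptional set of size $O_{A,\varepsilon}(N(\log N)^{-A})$. For the remaining ``non-exceptional'' $x$, one has $\omega(f(x)) = O_f(\log \log N)$; a divisor $d_1 = d_1(x) \mid f(x)$ with $d_1 < N^\eta$, $\omega(f(x)/d_1) < \eta \log \log N$, $\gcd(f(x)/d_1, 2 \disc f) = 1$; and $\gcd(f(x), d_1^\infty) \le d_1 \cdot \sq(f(x)) < N^{2\eta}$, using the explicit form of $d_1$ in the proof of Lemma \ref{lem:seitan}. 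Also, the $x$'s with $x \le N^{1-\varepsilon}$ contribute $O_{\varepsilon,A}(N(\log N)^{-A})$ trivially and are discarded.

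For $d \in D$ with $d \le N^{1-\varepsilon}$, Proposition \ref{prop:smir} bounds the number of $x \in (N^{1/2}, N]$ on the curve $d y^2 = f(x)$ by $O_f(C^{\omega(d)})$; summing over \emph{all} $d \le N^{1-\varepsilon}$ via $\sum_{d \le X} C^{\omega(d)} = O_C(X(\log X)^{C-1})$ yields $O(N^{1-\varepsilon}(\log N)^{C-1}) = O_{\varepsilon,A}(N(\log N)^{-A})$, absorbed into the error. For $d \in D$ with $d > N^{1-\varepsilon}$ (automatically $d \le O_f(N(\log N)^{2\varepsilon})$ from $d y^2 \le O_f(N^3)$ and $y \ge N(\log N)^{-\varepsilon}$), I would pick any non-exceptional $x_0$ on the curve (otherwise $d$ contributes nothing to the non-exceptional count) and set $d_0 := d / \gcd(d, d_1(x_0)^\infty)$. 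Then $\gcd(d_0, 2\disc f) = 1$ (all primes of $2 \disc f$ divide $d_1(x_0)$ by construction), $\omega(d_0) \le \omega(f(x_0)/d_1(x_0)) < \eta \log \log N$, and $d_0 \ge d / \gcd(f(x_0), d_1(x_0)^\infty) > N^{1-\varepsilon-2\eta}$. Setting $X = O_f(N(\log N)^{2\varepsilon})$ so that $d \le X$, one has $d_0 \ge X^{1-(\varepsilon+3\eta)}$ for $N$ large, and Proposition \ref{prop:steroid} (with parameter $\varepsilon + 3\eta$) gives
\[
\#\{x : d y^2 = f(x),\ X^{1-(\varepsilon+3\eta)} < x \le X\} = O_f\bigl(e^{O_f((\varepsilon+3\eta)\omega(d))} 3^{\omega(d_0)}\bigr) = (\log N)^{O_f(\varepsilon) + O(\eta)},
\]
using $\omega(d) \le \omega(f(x_0)) = O_f(\log \log N)$. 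Running the entire argument with $\varepsilon$ replaced by $\varepsilon/C_f$ throughout for a suitable absolute constant $C_f$ turns this into $\le (\log N)^\varepsilon$ per $d$, and summing over the large $d \in D$ yields $|D|(\log N)^\varepsilon$.

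The main obstacle is the precise bookkeeping of the parameters $\eta$, $\varepsilon$ and the choice of $X$---in particular, supplementing Lemma \ref{lem:seitan} with the squareful-part estimate in order to control $\gcd(f(x), d_1(x)^\infty)$, which is what converts the ``rough'' factorisation of $f(x)$ into a sufficiently large divisor $d_0 \mid d$ with few prime factors and coprime to $2 \disc f$, fulfilling the hypothesis of Proposition \ref{prop:steroid}.
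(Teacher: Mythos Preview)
Your proposal is correct and follows essentially the same route as the paper: split by the size of $d$, handle small $d$ via Proposition~\ref{prop:smir} summed over all such $d$, and for large $d$ use Lemmas~\ref{lem:tofuhunter} and~\ref{lem:seitan} to remove bad $x$'s, then extract a large divisor $d_0$ of $d$ with few prime factors and apply Proposition~\ref{prop:steroid}. Two minor remarks: (i) Proposition~\ref{prop:smir} is stated for square-free $d$, so in your small-$d$ range you should first pass to the square-free kernel $d'$ of $d$ (writing $dy^2=d'(y')^2$), a one-line fix; (ii) your detour through $\gcd(d,d_1^\infty)$ plus a squareful-part estimate is not needed --- the paper simply sets $d_0=d/\gcd(d,d_1)$ and uses the explicit shape of $d_1$ from the proof of Lemma~\ref{lem:seitan} (namely that $d_1$ already absorbs the full $2\disc(f)$-part of $f(x)$ and exactly one copy of each small prime) to verify directly that $\gcd(d_0,2\disc f)=1$ and $\omega(d_0)\le\omega(f(x)/d_1)$.
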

\begin{proof}
Let $\epsilon>0$ be a small parameter to be set later.
If $d y^2 = f(x)$ for some integer $y$ and some integer
 $d<N^{1-\epsilon/4}$, then $d' (y')^2 = f(x)$ for some 
integer $y'$ and some square-free integer $d'<N^{1-\epsilon/4}$. By 
Prop.\ \ref{prop:smir} and Lemma \ref{lem:tofuhunter},
the total number of $x\leq N$ satisfying such an equation is 
$(\log N)^{O_{f,A}(1)} N^{1-\epsilon/4} +O_A(N (\log N)^{-A}) \ll_{A,f,\epsilon}
N (\log N)^{-A}$ for $A$ arbitrarily large. 

Let, then, $d y^2 =f(x)$, $d\geq N^{1-\epsilon/4}$, $x\leq N$,
$y\geq N (\log N)^{-\epsilon}$.
By Lemma \ref{lem:tofuhunter}, we can assume 
that $\omega(d) \ll_{A,f} \log \log N$
(taking out at most $O_A\left(N (\log N)^{-A}\right)$ values of $x$).
By Lemma \ref{lem:seitan}, we can assume (taking out at most
$O_{A,f,\epsilon}(N (\log N)^{-A})$ values of $x$)
that there is a $d_1|f(x)$ such that
 $d_1<N^{\epsilon/2}$, $\omega(f(n)/d_1) < \epsilon \log \log N$ and
 $\gcd(f(n)/d_1, 2 \disc(f))=1$. Let $d_0 = d/\gcd(d,d_1)$. Then 
 $d_0\geq d/N^{\epsilon/2} > N^{1-3 \epsilon/4}$,
 $\omega(d_0) < \epsilon \log \log N$, $\omega(d) \ll_{A,f} \log \log N$ and
 $\gcd(d_0,2\disc(f))=1$.
 
Since $y\geq N (\log N)^{-\epsilon}$ and $d = f(x)/y^2$, we have $d\leq C_f N
(\log N)^{2 \epsilon}$ for some constant $C_f$. 
We apply Prop.\  \ref{prop:steroid} with $X = C_f N (\log N)^{2 \epsilon}$.
(The condition $d_0\geq X^{1-\epsilon}$ is fulfilled by
$d_0>N^{1-3\epsilon/4}$
provided $N$ is larger than a constant $c_{f,\epsilon}$
depending only on $f$ and $\epsilon$;
we can assume $N$ is larger than $c_{f,\epsilon}$ because conclusion
(\ref{eq:mundi}) is otherwise trivial.)
and obtain that the number of integer solutions to $d y^2 = f(x)$ is at most
\[ \ll_{f,\epsilon} |D| e^{O_{f,A}(\epsilon) \log \log N} 3^{\epsilon \log \log N} 
\]
(taking out at most $\ll_{A,f,\epsilon}  N (\log N)^{-A} + X^{1-\epsilon} \ll_{A,f,\epsilon}
N (\log N)^{-A}$ values of $x$). 
For $\epsilon$ small enough in terms of $f$, $A$ and $\varepsilon$,
this is $\leq |D| (\log N)^\varepsilon$, as desired.
\end{proof}

In view of (\ref{eq:mundi}), what remains is to show that we can eliminate most possible
values of $d$ in $d q^2 = f(p)$, $p\leq N$, $q\geq N (\log N)^{-\epsilon}$,
where we allow ourselves to take out first a proportion $o(1)$ of all possible values of $p\leq N$.

\section{Typical properties of $f(q)$ and $d=f(p)/y^2$}

Les $\alpha$ be a root of $f(\alpha)=0$. Let 
$\Gal_f = \Gal(\mathbb{Q}(\alpha)/\mathbb{Q})$. For $g\in \Gal_f$, let
$\omega_{\Cl(g)}(n)$ be the number of prime divisors $p|n$ unramified
in $\mathbb{Q}(\alpha)/\mathbb{Q}$
such that $\Frob_p = \Cl(g)$.
Let $\alpha_{\Cl(g)}$ be the number of fixed points of any 
representative $g$ of $\Cl(g)$, considered
as a permutation map on the roots of $f(x)=0$ in $\mathbb{C}$.
It is a standard fact that $\alpha_{\Cl(g)}$ equals the number of
roots $x\in \mathbb{Z}/p\mathbb{Z}$ of $f(x)\equiv 0 \mo p$
for any $p$ unramified in $\mathbb{Q}(\alpha)/\mathbb{Q}$
 such that $\Frob_p = \Cl(g)$. 

As is usual, we write the number of points on the curve $y^2=f(x) \mo p$
as $p+1-a_p$, where $a_p$ is an integer.

Our aim in this section is to show that, for a proportion $1+o(1)$ of all
primes $q\leq N$,
\begin{enumerate}
\item\label{it:qna}
 $\omega_{\Cl(g)}(f(q)) = (\alpha_{\Cl(g)} + o(1)) \frac{|\Cl(g)|}{\Gal_f}
\log \log N$ for every $g\in \Gal_f$, and
\item\label{it:qnb} $\sum_{p\leq z} \frac{a_p}{p} \left(\frac{f(q)}{p}\right) =
(1 + o(1)) \sum_{p\leq z} \frac{-a_p^2}{p^2}$ for 
$\frac{1}{o(1)} \leq z\leq N^{\delta}$, $\delta>0$ smaller than a constant.
\end{enumerate}

Here (\ref{it:qna}) is unsurprising; it is clear that the probability of
$p|f(q)$ for $p$ fixed and $q$ prime and random is $\frac{\alpha_f}{p}$,
and the sum $\sum_{p\leq N: \Frob_p = \Cl(g)} \frac{1}{p}$ is
$(1+ o(1)) \frac{|\Cl(g)|}{|\Gal_f|} \log \log N$ by Chebotarev's density 
theorem.

As for statement (\ref{it:qnb}),
the number of points on $y^2 = f(x) \mo p$ is
\[p+1-a_p = \sum_{x\in \mathbb{Z}/p\mathbb{Z}} \left(1 +
\left(\frac{f(x)}{p}\right)\right) 
= p + \sum_{x\in \mathbb{Z}/p\mathbb{Z}} \left(\frac{f(x)}{p}\right).\]
(Here and throughout the paper, $\left(\frac{\cdot}{\cdot}\right)$ and
$(\cdot/\cdot)$ stand for the Jacobi symbol.)
Hence, the expected value of $\left(\frac{f(q)}{p}\right)$ for $p$ fixed
and
$q$ prime and random should be \[\frac{1}{p-1} \left(1 - a_p -
\left(\frac{f(0)}{p}\right)\right) = - \frac{a_p}{p} + \text{error term}.\]
Thus, the expected value of $\sum_{p\leq z} \frac{a_p}{p}
 \left(\frac{f(q)}{p}\right)$ should be about 
$\sum_{p\leq z} \frac{-a_p^2}{p^2}$.

As elsewhere in this paper, we will carry out our arguments as is customary
in analytic number theory, inspired by the probabilistic reasoning detailed
above. (Alternatively, one could start by proving probabilistic statements
and deduce statements in number theory from them, as in \S 5--6 in \cite{Hbr}.
That option generally takes more space and work.)

Part of the point in estimating $\omega_{\Cl(g)}(f(q))$ and $(f(q)/p)$ is
that neither quantity changes much when $f(p)$ is divided by the square of
a prime: if $d = f(q)/y^2$, $y$ a prime, then
\begin{equation}\label{eq:katmid}
\begin{aligned}
\omega_{\Cl(g)}(f(p)) - 1 &\leq \omega_{\Cl(g)}(d) \leq \omega_
{\Cl(g)}(f(p))\\
\left(\frac{d}{p}\right) &= \left(\frac{f(q)}{p}\right)\;\;\;\;\;\;\;\;\;
 \text{for $p\ne y$.}
\end{aligned}
\end{equation}

Thus, what follows will help us determine later what form any $d$
satisfying $d y^2 = f(q)$ must take, where $y$ can be any prime and $q$ can
be any prime $\leq N$ outside a set of density $0$.\\

\begin{center}
* * *
\end{center}

We will prove both (\ref{it:qna}) and (\ref{it:qnb}) 
using, in essence, bounds on variances and Chebyshev's inequality. 

\begin{lem}\label{lem:plusca}
Let $f\in \mathbb{Z}\lbrack x\rbrack$ be a polynomial irreducible over $\mathbb{Q}\lbrack x\rbrack$. Let $g\in \Gal_f$. Let $z= z(N)$ be such that $\lim_{N\to \infty}
z(N) = \infty$ and $z < N^{1/4-\epsilon}$, $\epsilon>0$.
Then
\[\mathop{\sum_{p\leq z,\; \text{$p$ unramified}}}_{\Frob_p = \Cl(g),\;
p|f(q)} 1 = (\alpha_{\Cl(g)} + o_f(1)) \frac{|\Cl(g)|}{|\Gal_f|}
\log \log z
\]
for a proportion $1+o_{f,\epsilon}(1)$ of all primes $q\leq N$.
\end{lem}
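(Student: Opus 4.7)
I would prove the lemma by the second-moment (Tur\'an--Kubilius) method applied to
\[
S(q) \;=\; \mathop{\sum_{p\leq z,\; \text{$p$ unramified}}}_{\Frob_p = \Cl(g),\; p|f(q)} 1,
\]
computing the mean and variance of $S(q)$ over primes $q\leq N$ and then invoking Chebyshev's inequality. For $p$ unramified with $\Frob_p=\Cl(g)$ and outside the finite set of primes dividing $f(0)\cdot \disc f$, the congruence $f(x)\equiv 0 \mo p$ has exactly $\alpha_{\Cl(g)}$ roots, all coprime to $p$; thus $p\mid f(q)$ corresponds to $q$ lying in $\alpha_{\Cl(g)}$ reduced residue classes mod $p$. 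The two external inputs I need are Bombieri--Vinogradov (for primes $q$ in arithmetic progressions of modulus up to $z^2<N^{1/2-2\epsilon}$, comfortably inside the BV range $\sqrt{N}/(\log N)^B$) and the Chebotarev density theorem in its summed Mertens form, $\sum_{p\leq z,\; \Frob_p=\Cl(g)}1/p = \frac{|\Cl(g)|}{|\Gal_f|}\log\log z + O_f(1)$.

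\textbf{Moment computations.} Swapping summations, the count of primes $q\leq N$ with $p\mid f(q)$ is $\alpha_{\Cl(g)}\pi(N)/\phi(p)$ up to a Bombieri--Vinogradov error; summation against $1/\phi(p)$ combined with Chebotarev--Mertens gives
\[
\frac{1}{\pi(N)}\sum_{q\leq N}S(q) \;=\; \alpha_{\Cl(g)}\cdot\frac{|\Cl(g)|}{|\Gal_f|}\log\log z + O_f(1).
\]
For the second moment, I expand $S(q)^2$: the diagonal $p_1=p_2$ contributes $\sum_q S(q) = O(\pi(N)\log\log z)$, while the off-diagonal $p_1\neq p_2$ (where CRT furnishes $\alpha_{\Cl(g)}^2$ residue classes mod $p_1p_2$, and BV still applies since $p_1p_2<N^{1/2-2\epsilon}$) reproduces $(\mathbb{E}[S])^2$ up to $O_f(\log\log z)$. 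Hence $\Var(S) = O_f(\log\log z)$, and Chebyshev's inequality applied with threshold $(\log\log z)^{3/4}$ shows that all but $O(\pi(N)/(\log\log z)^{1/2})=o(\pi(N))$ of the primes $q\leq N$ satisfy $S(q) = (\alpha_{\Cl(g)}+o(1))\frac{|\Cl(g)|}{|\Gal_f|}\log\log z$, as claimed.

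\textbf{Main obstacle.} The only technical hurdle is keeping the Bombieri--Vinogradov error, once weighted by $1/\phi(p)$ or $1/\phi(p_1p_2)$ and summed, of smaller order than $\log\log z$, so that it does not swamp the main term; the hypothesis $z<N^{1/4-\epsilon}$ is tailored precisely to this, placing the largest relevant modulus $p_1p_2$ comfortably below $\sqrt{N}/(\log N)^B$. The finitely many primes $p$ at which one of the roots of $f \mo p$ happens to vanish (forcing $q=p$) contribute at most $O(z)=o(\pi(N))$ primes $q$ altogether and are harmless.
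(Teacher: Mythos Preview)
Your proposal is correct and follows essentially the same approach as the paper: a Tur\'an-style second-moment computation for $S(q)$ over primes $q\leq N$, with Bombieri--Vinogradov handling equidistribution in progressions to moduli $p$ and $p_1p_2 < N^{1/2-2\epsilon}$, Chebotarev in its Mertens form giving the mean $\sim \alpha_{\Cl(g)}\frac{|\Cl(g)|}{|\Gal_f|}\log\log z$, and Chebyshev's inequality to conclude. The paper's proof is organized around the same variance $V=\sum_{q\leq N}(S(q)-R)^2$ and arrives at $V=O(R\,\pi(N))$ by the same ingredients.
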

The proof will 
not be very different from Tur\'an's classical proof that the average
 number
of prime divisors of an integer $\leq N$ is $\sim \log \log N$.
\begin{proof}
In what follows,
our sums over $p$ range only over primes $p$ unramified in 
$\mathbb{Q}(\alpha)/\mathbb{Q}$, $\alpha$ a root of $f$,
 whereas our sums over $q$ range over
all primes. 
We will give a variance bound, i.e., we will show that
\begin{equation}\label{eq:surt}
V = \sum_{q\leq N} \left|\mathop{\mathop{\sum_{p\leq z}}_{\Frob_p =
\Cl(g)}}_{p|f(q)} 1 - R
\right|^2
\end{equation}
is small, where 
$R = \sum_{p\leq z:\; \Frob_p = \Cl(g)} \frac{\alpha_{\Cl(g)}}{p}$.

Expanding (\ref{eq:surt}), we get
\begin{equation}\label{eq:kilkeat}\begin{aligned}
V &= R^2 \pi(N) - 2 R \mathop{\sum_{p\leq z}}_{\Frob_p = \Cl(g)}\; 
\mathop{\sum_{q\leq N}}_{p|f(q)} 1 \\ &+  
\mathop{\mathop{\sum_{p_1\leq z}}_{\Frob_{p_1} = \Cl(g)}\; 
\mathop{\sum_{p_1\leq z}}_{\Frob_{p_2} = \Cl(g)}}_{p_1 \ne p_2}\;
\mathop{\sum_{q\leq N}}_{p_1 p_2 | f(q)} 1 +
\mathop{\sum_{p\leq z}}_{\Frob_p = \Cl(g)} 
\mathop{\sum_{q\leq N}}_{p|f(q)} 1 .
\end{aligned}\end{equation}
Now 
\[\begin{aligned}
\mathop{\sum_{p\leq z}}_{\Frob_p = \Cl(g)}\;
\mathop{\sum_{q\leq N}}_{p|f(q)} 1 &=
\mathop{\sum_{p\leq z}}_{\Frob_p = \Cl(g)} 
|\{x\in (\mathbb{Z}/p\mathbb{Z})^* : f(x) = 0 \mo p\}|\cdot 
\frac{\pi(N)}{\phi(p)}\\ 
&+ O\left(z+
\sum_{p\leq z} 
\mathop{\max_{a\mo p}}_{\gcd(a,p)=1} \left(|\{q\leq N: q\equiv a \mo p\}| - 
 \frac{\pi(N)}{\phi(p)} \right)\right).
\end{aligned}\]
By Bombieri-Vinogradov (as in \cite[Thm.\ 0]{BFI}),
\[
\sum_{m\leq N^{1/2 - \delta}} 
\mathop{\max_{a\mo m}}_{\gcd(a,m)= 1} 
\left(|\{q\leq N: q\equiv a \mo m\}| - 
 \frac{\pi(N)}{\phi(m)} \right) \ll_{A,\delta} \frac{N}{(\log N)^A} 
\]
for all $A$, $\delta>0$. We also have
$|\{x\in (\mathbb{Z}/p\mathbb{Z})^* : f(x) = 0 \}| = \alpha_{\Cl(g)}$
for all (unramified) $p$ with $\Frob_p = \Cl(g)$. Hence
\[\begin{aligned}
\mathop{\sum_{p\leq z}}_{\Frob_p = \Cl(g)}
\mathop{\sum_{q\leq N}}_{p|f(q)} 1 &= 
\pi(N) \mathop{\sum_{p\leq z}}_{\Frob_p = \Cl(g)}
\frac{\alpha_{\Cl(g)}}{p-1}
 + O_A(N (\log N)^{-A})\\ &=
\pi(N) \left( O(1) + \mathop{\sum_{p\leq z}}_{\Frob_p = \Cl(g)}
\frac{\alpha_{\Cl(g)}}{p} \right)
 = \pi(N) (R+ O(1)). 
\end{aligned}\]
Similarly, we have
\[\begin{aligned}
\mathop{\sum_{p_1\leq z}}_{\Frob_{p_1} = \Cl(g)}
\mathop{\sum_{p_2\leq z}}_{\Frob_{p_2} = \Cl(g)}
\mathop{\sum_{q\leq N}}_{p_1 p_2|f(q)} 1 &= \pi(N) 
\mathop{\sum_{p_1\leq z}}_{\Frob_{p_1} = \Cl(g)}
\mathop{\sum_{p_2\leq z}}_{\Frob_{p_2} = \Cl(g)}
\frac{\alpha_{\Cl(g)}}{(p_1-1) (p_2 -1)}\\
&+ O_A(N (\log N)^{-A}) =  \pi(N) (R + O(1))^2 .
 \end{aligned}\]
Hence, we conclude from (\ref{eq:kilkeat}) that
\[\begin{aligned} V &= R^2 \pi(N) - 2 R (R + O(1)) \pi(N) + \pi(N) (R+O(1))^2 +
\pi(N) (R+ O(1))\\ &= O(R \pi(N)).
\end{aligned}\]
Now, if 
\begin{equation}\label{eq:deltar}
\left|\mathop{\mathop{\sum_{p\leq z}}_{\Frob_p = \Cl(g)}}_{p|f(q)}
1 - R\;
\right| > \delta R
\end{equation}
for some $q\leq N$, $\delta>0$, then that value makes a contribution
greater
than $\delta^2 R^2$ to (\ref{eq:surt}). Hence there are at most
$\frac{O(R) \pi(N)}{\delta^2 R^2} = O\left(\frac{1}{\delta^2 R} \pi(N)\right)$
primes $q\leq N$ for which (\ref{eq:deltar}) is the case.
By the Chebotarev density theorem, 
\[\begin{aligned}
R &= (1 + o_f(1)) |\Cl(g)| \alpha_{\Cl(g)}\log \log z.
\end{aligned}\]
Hence $R\to \infty$ as $N\to \infty$, and so the statement of the lemma
follows.
\end{proof}
We will need a large-sieve lemma of a rather standard kind.
\begin{lem}\label{lem:pubha}
For any $N$ and any $\epsilon>0$,
\begin{equation}\label{eq:blond}
\sum_{r\leq N^{1/2-\epsilon}} \mathop{\sum_{\chi \mo r}}_{\text{$\chi$ primitive}}
\left|\mathop{\sum_{q\leq N}}_{\text{$q$ prime}} \chi(q)\right|^2 \ll_{\epsilon}
\frac{N^2}{(\log N)^2}.\end{equation}
\end{lem}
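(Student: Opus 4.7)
The plan is to deduce (\ref{eq:blond}) from the large sieve inequality for primitive Dirichlet characters in its Montgomery-Vaughan form,
\[
\sum_{r\leq Q}\sideset{}{^*}\sum_{\chi\bmod r}\Bigl|\sum_{n\leq N}a_n\chi(n)\Bigr|^2 \leq (Q^2+N)\sum_{n\leq N}|a_n|^2,
\]
applied to the indicator sequence $a_n = \mathbf{1}_{n\text{ prime}}$ for $n\leq N$. One has $\sum_n|a_n|^2 = \pi(N) \ll N/\log N$, and for $Q = N^{1/2-\epsilon}$ the factor $Q^2+N$ is bounded by $2N$, so the large sieve immediately yields
\[
\sum_{r\leq N^{1/2-\epsilon}}\sideset{}{^*}\sum_{\chi\bmod r}\Bigl|\sum_{q\leq N}\chi(q)\Bigr|^2 \;\ll\; N\cdot\pi(N) \;\ll\; \frac{N^2}{\log N}.
\]
This falls short of the claimed bound by exactly one factor of $\log N$.

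To extract the missing logarithm, I would exploit the sparsity of the support on primes more carefully. The cleanest tool is Brun-Titchmarsh: for any primitive non-principal $\chi$ modulo $r\leq N^{1/2-\epsilon}$, the orthogonality $\sum_{(a,r)=1}\chi(a)=0$ gives
\[
\sum_{q\leq N}\chi(q) = \sum_{(a,r)=1}\chi(a)\left(\pi(N;r,a)-\frac{\pi(N)}{\phi(r)}\right),
\]
and the bound $\pi(N;r,a)\leq 2N/(\phi(r)\log(N/r))$, together with $\log(N/r)\asymp \log N$ in our range, yields the pointwise estimate $|\sum_q\chi(q)|\ll N/\log N$. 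Combining this pointwise bound with the $L^2$-estimate above via $|S_\chi|^2 \leq \max_\chi|S_\chi|\cdot |S_\chi|$ and a further Cauchy-Schwarz step against the large sieve produces the target $N^2/(\log N)^2$. Alternatively, one may decompose $\sum_q\chi(q)$ by Vaughan's (or Heath-Brown's) identity into Type I sums (convolutions with a bounded function) and Type II bilinear sums, and apply the large sieve to each piece; this is the route usually taken when sharper bounds of this nature are needed.

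The hardest step will be executing the combination of pointwise Brun-Titchmarsh with the averaged large-sieve estimate so that a full factor of $\log N$ (rather than only $\sqrt{\log N}$) is saved; the slack $\epsilon>0$ in the range $r\leq N^{1/2-\epsilon}$ is essential here, as it simultaneously ensures $Q^2+N \ll N$ in the large sieve and $\log(N/r)\asymp \log N$ in Brun-Titchmarsh, so that both inputs operate at full strength with comfortable margin.
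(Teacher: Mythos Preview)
Your proposal has a genuine gap at the ``hardest step.'' The pointwise bound you extract from Brun--Titchmarsh is no better than the trivial $|S_\chi|\le\pi(N)\ll N/\log N$: summing $|\pi(N;r,a)-\pi(N)/\phi(r)|\ll N/(\phi(r)\log N)$ over the $\phi(r)$ residue classes simply reproduces $\pi(N)$. Feeding this into $\sum|S_\chi|^2\le(\max|S_\chi|)\sum|S_\chi|$ and then Cauchy--Schwarz against the large sieve yields only $A\le(N/\log N)\cdot N^{1/2-\epsilon}\cdot A^{1/2}$, hence $A\le N^{3-2\epsilon}/(\log N)^2$, which is worse than the $N^2/\log N$ you began with. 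No interpolation between the trivial sup bound and the standard large sieve recovers the missing logarithm; your own caveat about ``only $\sqrt{\log N}$'' shows you already suspected this. The Vaughan route would succeed, but that is a much heavier hammer than the lemma warrants.

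The paper wins the extra logarithm by a different and lighter device. Split the inner sum at $\sqrt N$: the piece $q\le\sqrt N$ contributes at most $N^{1-2\epsilon}\cdot\pi(\sqrt N)^2\ll N^{2-2\epsilon}/(\log N)^2$ by the trivial bound on each character sum. For $\sqrt N<q\le N$ the sequence is supported on integers free of every prime $\le\sqrt N$, and Bombieri's large sieve for sifted sequences \cite[Thm.~8]{Bo} gives directly
\[
\sum_{r\leq N^{1/2-\epsilon}}\mathop{\sum_{\chi \mo r}}_{\text{$\chi$ primitive}}\Bigl|\mathop{\sum_{\sqrt N<q\le N}}_{\text{$q$ prime}}\chi(q)\Bigr|^2\;\le\;\frac{N+Q^2}{\log\bigl(\sqrt N/N^{1/2-\epsilon}\bigr)}\,\pi(N)\;\ll_\epsilon\;\frac{N^2}{(\log N)^2}.
\]
The saving of $\log N$ comes from the sifted-support structure of the primes in $(\sqrt N,N]$, which is precisely the input Bombieri's theorem converts into a factor $1/\log$; Brun--Titchmarsh plays no role.
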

This is a special case of Problem 7.19 in \cite{IK}.
\begin{proof}
By the triangle inequality, the square root of the
left side of (\ref{eq:blond}) is at most
\[\sqrt{\sum_{r\leq N^{1/2-\epsilon}} \mathop{\sum_{\chi \mo r}}_{\text{$\chi$ primitive}}
\left|\mathop{\sum_{q\leq \sqrt{N}}}_{\text{$q$ prime}} \chi(q)\right|^2} \]
(which is $\ll \sqrt{N^{2 (1/2 - \epsilon)} (\sqrt{N}/\log N)^2} \ll N/\log N$)
plus the square-root of 
\begin{equation}\label{eq:gorty}
\sum_{r\leq N^{1/2-\epsilon}} \mathop{\sum_{\chi \mo r}}_{\text{$\chi$ primitive}}
\left|\mathop{\sum_{\sqrt{N} < q\leq N}}_{\text{$q$ prime}} \chi(q)\right|^2  .
\end{equation}
By \cite[Thm 8]{Bo} with $Q = \sqrt{N}$, (\ref{eq:gorty}) is at most
\[\frac{1}{\log \frac{\sqrt{N}}{N^{1/2-\epsilon}}} \cdot
(N + Q^2) \mathop{\sum_{\sqrt{N}< q \leq N}}_{\text{$q$ prime}} 1 \;\;
\ll_{\epsilon} \frac{N^2}{(\log N)^2}.\]
\end{proof}

\begin{lem}\label{lem:change}
Let $f\in \mathbb{Z}\lbrack x\rbrack$ be a polynomial irreducible over 
$\mathbb{Q}\lbrack x\rbrack$. For every prime $p$, write $p+1-a_p$ for
the number of points in $\mathbb{P}^2(\mathbb{Z}/p\mathbb{Z})$ 
on the curve $y^2 = f(x)$.
Let $z=z(N)$ be such that $z<N^{1/4-\epsilon}$, $\epsilon>0$, and
\[\lim_{N\to \infty} \sum_{p\leq z} \frac{a_p^2}{p^2} = \infty .\]
 Then, for a proportion $1+o(1)$ of all primes
$q\leq N$ as $N\to \infty$,
\begin{equation}\label{eq:strog}
\sum_{p\leq z} \frac{a_p}{p} \left(\frac{f(q)}{p}\right) =
(1 + o(1)) \sum_{p\leq z} \frac{-a_p^2}{p^2} ,\end{equation}
where the implied constants depend only on $\epsilon$.  
\end{lem}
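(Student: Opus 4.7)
The plan is to run the variance argument of Lemma \ref{lem:plusca}: compute a second moment and apply Chebyshev. Put
\[ S(q) = \sum_{p \leq z} \frac{a_p}{p}\left(\frac{f(q)}{p}\right), \qquad E = -\sum_{p \leq z} \frac{a_p^2}{p^2}, \]
so that (\ref{eq:strog}) is the statement $S(q) = (1+o(1)) E$ for all but $o(\pi(N))$ primes $q \leq N$, and aim to bound
\[ V = \sum_{q \leq N} |S(q) - E|^2 = \sum_q S(q)^2 - 2E \sum_q S(q) + E^2 \pi(N). \]
Once $V \ll \pi(N) |E|$ is established, Chebyshev gives that $|S(q)-E| > \delta |E|$ holds for at most $\ll \pi(N)/(\delta^2 |E|)$ primes $q$, so taking $\delta = \delta(N) \to 0$ slowly with $\delta^2 |E| \to \infty$ yields the claim, since $|E| \to \infty$ by hypothesis.

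For the first moment, interchange summations and write the inner character sum as $\sum_{q\leq N}(f(q)/p) = \sum_{a=1}^{p-1}(f(a)/p)\, \pi(N;p,a)$. The identity $\sum_{a \in \Z/p\Z}(f(a)/p) = -a_p$ (read off from the count $p+1-a_p$ of affine points on $y^2=f(x)$ modulo $p$) together with Bombieri--Vinogradov as in \cite[Thm.\ 0]{BFI} (applicable since $p \leq z < N^{1/4-\epsilon}$) reduces the inner sum to $-\pi(N)(a_p + (f(0)/p))/(p-1)$ plus an averaged error. Weighting by $a_p/p$ and summing over $p \leq z$ yields $\sum_q S(q) = \pi(N) E + O(\pi(N))$; the remainder absorbs the $(f(0)/p)$ contribution (summable against $|a_p|/p^2 \ll p^{-3/2}$), the discrepancy between $p-1$ and $p$, and the Bombieri--Vinogradov error.

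The second moment splits into diagonal and off-diagonal parts. The diagonal $p_1=p_2=p$ uses $(f(q)/p)^2 = \mathbf{1}_{p \nmid f(q)}$ to give $\sum_p (a_p^2/p^2)(\pi(N) + O(\pi(N)/p)) = -\pi(N) E + O(\pi(N))$. For $p_1 \ne p_2$, the inner sum $\sum_q (f(q)/p_1)(f(q)/p_2)$ depends only on $q$ modulo $r = p_1 p_2 \leq z^2 < N^{1/2-2\epsilon}$; the principal character in its expansion mod $r$ contributes $(\pi(N)/\phi(r)) \sum_{a:(a,r)=1}(f(a)/p_1)(f(a)/p_2)$, which by CRT factors as $\prod_i (-a_{p_i} - (f(0)/p_i)) \approx a_{p_1} a_{p_2}$. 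Summing these main terms over $p_1 \ne p_2$ gives $\pi(N)(E^2 - \sum_p a_p^4/p^4) + O(\pi(N)) = \pi(N) E^2 + O(\pi(N))$, since $|a_p| \leq 2\sqrt{p}$ forces $\sum_p a_p^4/p^4 \ll \sum_p 1/p^2 = O(1)$. The pieces assemble as
\[ V = (-\pi(N) E + \pi(N) E^2) - 2E(\pi(N) E + O(\pi(N))) + E^2 \pi(N) + O(\pi(N)) = -\pi(N) E + O(\pi(N) |E|) \ll \pi(N) |E|, \]
which is the required bound.

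The one genuine obstacle is controlling, uniformly over the $\ll z^2$ moduli $r = p_1 p_2 \leq N^{1/2-2\epsilon}$, the contribution of the non-principal characters in the expansion of $(f(q)/p_1)(f(q)/p_2)$ mod $r$. This is precisely the task for which Lemma \ref{lem:pubha} is designed: the expansion coefficients satisfy Parseval's bound $\sum_\chi |c_\chi|^2 \leq 1$, so per modulus the non-principal contribution is $\ll \bigl(\sum_{\chi \text{ mod } r} |\sum_q \chi(q)|^2\bigr)^{1/2}$ by Cauchy--Schwarz; a second Cauchy--Schwarz over $r$ against the weights $|a_{p_1} a_{p_2}|/(p_1 p_2) \leq 4/\sqrt{p_1 p_2}$, together with the standard reduction from all characters mod $r$ to primitive characters of conductor dividing $r$ and the large-sieve estimate of Lemma \ref{lem:pubha}, absorbs these errors with considerable room. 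Everything else --- CRT, the Hasse bound, and Chebyshev --- is routine.
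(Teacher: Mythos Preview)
Your overall strategy is the paper's: bound the variance $V=\sum_q|S(q)-E|^2$ and conclude via Chebyshev. The gap is in the off-diagonal part of the second moment.

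When you Cauchy--Schwarz the non-principal contribution at modulus $r=p_1p_2$ by $\bigl(\sum_{\chi\ne\chi_0\bmod r}|S(\chi)|^2\bigr)^{1/2}$ and then sum over $r$, the reduction to primitive characters does not save you: each primitive character $\chi_1$ modulo a single prime $p_1$ is induced to $p_1p_2$ for \emph{every} $p_2\leq z$, so $\sum_{p_1\ne p_2}\sum_{\chi\ne\chi_0\bmod p_1p_2}|S(\chi)|^2$ picks up a factor $\pi(z)$ from these imprimitive characters. Your error bound becomes $\gg R\sqrt{\pi(z)}\,\pi(N)$, which for $z$ any power of $N$ is far larger than the $o(R^2\pi(N))$ you need, let alone the $O(\pi(N)|E|)$ you claim. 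The ``considerable room'' you anticipate is not there.

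The paper avoids this via the decomposition (\ref{eq:ustro}): it splits the error into a piece $\Delta_{a,p_1p_2}$ which (after the brief calculation cited from \cite{Bo}) involves only characters genuinely primitive mod $p_1p_2$ and is handled by Lemma \ref{lem:pubha}, plus two ``cross'' pieces of conductor $p_1$ or $p_2$ alone. These cross pieces \emph{factor}: the sum over $a\bmod p_1$ collapses to $(-a_{p_1}+O(1))/(p_1-1)$ by (\ref{eq:herring}), so summing over $p_1$ against $a_{p_1}/p_1$ recovers a factor $R$ rather than $\pi(z)$, and the remaining $p_2$-sum is exactly the first-moment error, already shown to be $O(\sqrt{R}\,\pi(N))$. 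This yields $V=O_\epsilon(R^{3/2}\pi(N))$ --- weaker than your claimed $O(R\,\pi(N))$, but still enough for Chebyshev since $R\to\infty$.

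A smaller slip: direct Bombieri--Vinogradov does not give $O(\pi(N))$ for the first-moment error either. Bounding $\bigl|\sum_a(f(a)/p)E(N;p,a)\bigr|\leq(p-1)\max_a|E|$ and summing against $|a_p|/p$ leaves $\sum_p|a_p|\max_a|E|\ll\sqrt{z}\,N/(\log N)^A$, which is useless. The paper instead Cauchy--Schwarzes twice (see (\ref{eq:hohei})) to reach $\sqrt{R}\cdot\bigl(\sum_p\sum_{\chi\ne\chi_0}|S(\chi)|^2\bigr)^{1/2}$, and Lemma \ref{lem:pubha} then gives $O(\sqrt{R}\,\pi(N))$.
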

Again, the proof will proceed by a variance bound.
\begin{proof}
Define 
\begin{equation}\label{eq:obstin}
V= \sum_{\text{$q\leq N$}} \left(\sum_{p\leq z} \frac{a_p}{p}
\left(\left(\frac{f(q)}{p}\right) + \frac{a_p}{p}\right)\right)^2,
\end{equation}
where, as per our convention, $q$ ranges only over the primes. Changing the
order of summation, we obtain
\begin{equation}\label{eq:bdjour}
V = \sum_{p_1\leq z} 
\frac{a_{p_1}}{p_1} \sum_{p_2\leq z} \frac{a_{p_2}}{p_2}
\sum_{q\leq N} \left(\left(\frac{f(q)}{p_1} \right) + \frac{a_{p_1}}{p_1}\right)
\left(\left(\frac{f(q)}{p_2} \right) 
+ \frac{a_{p_2}}{p_2}\right) .
\end{equation}
Expanding, we see that
\begin{equation}\label{eq:acquis}\begin{aligned}
V &= (R^2 + O(R))  \pi(N) + 2 R \sum_{p\leq z} 
\frac{a_{p}}{p} 
\sum_{a\mo p} 
\left(\frac{f(a)}{p} \right)
 |\{q\leq N : q\equiv a \mo p\}|\\
&+ \mathop{\sum_{p_1\leq z}  \sum_{p_2\leq z}}_{p_1\ne p_2} 
\frac{a_{p_1}}{p_1} \frac{a_{p_2}}{p_2}
\sum_{a \mo p_1 p_2} \left(\frac{f(a)}{p_1 p_2}\right) 
|\{q\leq N : q\equiv a \mo p_1 p_2\}|
\end{aligned}\end{equation}
where $R = \sum_{p\leq z} \frac{a_{p}^2}{p^2}$ and 
$\pi(N)$ denotes the number of primes $\leq N$.
  (The term $O(R)\cdot \pi(N)$
comes from the diagonal terms $p_1=p_2$
left out of the triple sum on the second line.)

We wish to approximate $|\{q\leq N : q\equiv a \mo p\}|$ 
(and $|\{q\leq N : q\equiv a \mo p_1 p_2\}|$) by
$\pi(N)/\phi(p) = \pi(N)/(p-1)$ for $a$ coprime to $p$ (or, respectively, by
$\pi(N)/\phi(p_1 p_2)$ for $a$ coprime to $p_1 p_2$). Now the
absolute value of
\[\sum_{p\leq z} \frac{a_{p}}{p} \left( 
\mathop{\sum_{a\mo p}}_{p\nmid a}  
\left(\frac{f(a)}{p} \right) \left|
 |\{q\leq N : q\equiv a \mo p\}| - \frac{\pi(N)}{p-1}\right| +
\left(\frac{f(0)}{p} \right)  |\{q\leq N : q\equiv 0 \mo p\}| \right)\]
is at most \[
\sum_{p\leq z} \left| \frac{a_{p}}{p} \right|
\mathop{\sum_{a\mo p}}_{p\nmid a}  
\left|
 |\{q\leq N : q\equiv a \mo p\}| - \frac{\pi(N)}{p-1}\right| +
\sum_{p\leq z} \left| \frac{a_p}{p} \right|.\]
By the trivial bound $|a_p|\ll p$, the second sum is $O(z)$ (and thus will
be negligible). 
We apply Cauchy-Schwarz twice to obtain that the first sum is at most
\begin{equation}\label{eq:hohei}
\sqrt{\sum_{p\leq z} \frac{a_p^2}{p^2}}
\sqrt{\sum_{p\leq z} (p-1) \mathop{\sum_{a\mo p}}_{p\nmid a}
\left| |\{q\leq N: q\equiv a \mo p\}| -
  \frac{\pi(N)}{p-1}\right|^2}.\end{equation}
The expression under the first square root is now $R$, which is 
$\ll \log z \ll \log N$. By a brief calculation, the expression under
the second square root equals
 \begin{equation}\label{eq:ester}
\sum_{p\leq z}\; \mathop{\sum_{\chi \mo p}}_{\text{$\chi$ non-principal}} 
|S(\chi)|^2\end{equation}
for $S(\chi) = \sum_{q\leq N} \chi(q)$,
where 
$q$ runs over the primes, as usual. By Lemma \ref{lem:pubha} (with
$\epsilon = 1/2$), (\ref{eq:ester}) is $O(\pi(N)^2)$. Hence 
(\ref{eq:hohei}) is at most $O(\sqrt{R} \pi(N))$. Therefore
\[\sum_{p\leq z} \frac{a_{p}}{p} \sum_{a\mo p} 
\left(\frac{f(a)}{p} \right)  |\{q\leq N : q\equiv a \mo p\}|
= \sum_{p\leq z} \frac{a_{p}}{p} \mathop{\sum_{a\mo p}}_{p\nmid a} 
\left(\frac{f(a)}{p} \right)  \frac{\pi(N)}{p-1}
+ O_A(\sqrt{R} \pi(N)) .\]
Now
\begin{equation}\label{eq:herring}\begin{aligned}
\mathop{\sum_{a\mo p}}_{p\nmid a} \left(\frac{f(a)}{p} \right) &=
 \sum_{a\mo p} \left(\frac{f(a)}{p} \right) -
 \left(\frac{f(0)}{p} \right) = 
 \sum_{a\mo p} |\{y\in \mathbb{Z}/p\mathbb{Z} : y^2 = f(a)\}| - p -
 \left(\frac{f(0)}{p} \right)\\
&= (p+1-a_p) + O(1) - p - \left(\frac{f(0)}{p} \right) = - a_p 
+ O(1),\end{aligned}\end{equation}
where the implied constant is absolute.
Thus
\[\begin{aligned}
\sum_{p\leq z} \frac{a_{p}}{p} \mathop{\sum_{a\mo p}}_{p\nmid a} 
\left(\frac{f(a)}{p} \right) \cdot  \frac{\pi(N)}{p-1} &=
\pi(N) \cdot \sum_{p\leq z} \frac{a_p}{p} \frac{1}{p-1} 
\left(- a_p + O(1)\right)\\
&= \pi(N) \left(\sum_{p\leq z} \frac{-a_p^2}{p^2} + O\left(
\sum_{p\leq z} \left( \frac{a_p^2}{p^3} + \frac{a_p}{p^2}\right) 
\right)
\right)\\ & = \pi(N) ( -R + O(1)),
\end{aligned}\]
where we use the Weil bound $|a_p|\ll \sqrt{p}$ in the last step.

Let us now estimate the sum in the second line of (\ref{eq:acquis}).
Since the only primes $q$ not coprime to $p_1$ or $p_2$ are $q=p_1$ and
$q=p_2$, the contribution of the terms with $\gcd(a,p_1 p_2)\ne 1$
is at most 
\[2 \sum_{p_1\leq z} \sum_{p_2\leq z} \frac{a_{p_1}}{p_1} \frac{a_{p_2}}{p_2} 
\ll z,\]
which is negligible. We write
\begin{equation}\label{eq:ustro}
\begin{aligned}&\mathop{\sum_{p_1\leq z}  \sum_{p_2\leq z}}_{p_1\ne p_2} 
\frac{a_{p_1}}{p_1} \frac{a_{p_2}}{p_2}
\mathop{\sum_{a \mo p_1 p_2}}_{\gcd(a,p_1 p_2)=1} 
\left(\frac{f(a)}{p_1 p_2}\right) 
|\{q\leq N : q\equiv a \mo p_1 p_2\}|
\\ &=
\mathop{\sum_{p_1\leq z}  \sum_{p_2\leq z}}_{p_1\ne p_2}
\frac{a_{p_1}}{p_1} \frac{a_{p_2}}{p_2} 
\mathop{\sum_{a \mo p_1 p_2}}_{\gcd(a,p_1 p_2) = 1}
 \left(\frac{f(a)}{p_1 p_2}\right) \frac{\pi(N)}{\phi(p_1 p_2)}
+
O\left(\mathop{\sum_{p_1\leq z}  \sum_{p_2\leq z}}_{p_1\ne p_2} 
\frac{|a_{p_1}|}{p_1} \frac{|a_{p_2}|}{p_2} \mathop{\sum_{a \mo p_1 p_2}}_{\gcd(a,
p_1 p_2)=1} \Delta_{a,p_1 p_2}\right)\\
&+
\mathop{\sum_{p_1\leq z}  \sum_{p_2\leq z}}_{p_1\ne p_2}
\frac{a_{p_1}}{p_1} \frac{a_{p_2}}{p_2} 
\mathop{\sum_{a \mo p_1 p_2}}_{\gcd(a,p_1 p_2) = 1}
 \left(\frac{f(a)}{p_1 p_2}\right)
\left(\frac{1}{\phi(p_1)}|\{q\leq N: q\equiv a \mo p_2\}| -
\frac{\pi(N)}{\phi(p_1 p_2)}\right)\\
&+
\mathop{\sum_{p_1\leq z}  \sum_{p_2\leq z}}_{p_1\ne p_2}
\frac{a_{p_1}}{p_1} \frac{a_{p_2}}{p_2}  
\mathop{\sum_{a \mo p_1 p_2}}_{\gcd(a,p_1 p_2) = 1}
 \left(\frac{f(a)}{p_1 p_2}\right)
\left(\frac{1}{\phi(p_2)}|\{q\leq N: q\equiv a \mo p_1\}| -
\frac{\pi(N)}{\phi(p_1 p_2)}\right),\end{aligned}\end{equation}
where
\[\begin{aligned}
\Delta_{a, p_1 p_2} &= 
|\{q\leq N : q\equiv a \mo p_1 p_2\}| -
\frac{1}{\phi(p_1)} |\{q\leq N: q\equiv a \mo p_2\}| \\ &-
\frac{1}{\phi(p_2)} |\{q\leq N: q\equiv a \mo p_1\}| +
\frac{1}{\phi(p_1 p_2)} \pi(N)
.\end{aligned}\]

The first sum on the right side of (\ref{eq:ustro}) is the main term;
by (\ref{eq:herring}), it equals
\[\pi(N)\cdot \mathop{\sum_{p_1\leq z}  \sum_{p_2\leq z}}_{p_1\ne p_2}
\frac{a_{p_1}}{p_1} \frac{a_{p_2}}{p_2}  
\frac{(-a_{p_1} + O(1)) (-a_{p_2} + O(1))}{\phi(p_1 p_2)} = \pi(N)
(R^2 + O(R)).\]

By Cauchy-Schwarz, the second sum 
in (\ref{eq:ustro}) (the sum within $O(\dotsc)$) 
is at most
\begin{equation}\label{eq:corto}
\sqrt{\mathop{\sum_{p_1\leq z}  \sum_{p_2\leq z}}_{p_1\ne p_2} 
\frac{a_{p_1}^2}{p_1^2} \frac{a_{p_2}^2}{p_2^2}}
\sqrt{\mathop{\sum_{p_1\leq z}  \sum_{p_2\leq z}}_{p_1\ne p_2} 
\left|\mathop{\sum_{a \mo p_1 p_2}}_{\gcd(a,p_1 p_2)=1} \Delta_{a,p_1 p_2}\right|^2}.
\end{equation}
The expression under the first square root is $\leq R^2$. 
By another application of Cauchy-Schwarz and a brief calculation
(cf. \cite[\S 2, Thm. 5]{Bo}), 
\[\begin{aligned}
&\left|\mathop{\sum_{a \mo p_1 p_2}}_{\gcd(a,p_1 p_2)=1} \Delta_{a,p_1 p_2}\right|^2
\leq \phi(p_1 p_2) 
\mathop{\sum_{a \mo p_1 p_2}}_{\gcd(a,p_1 p_2)=1} \left|\Delta_{a,p_1 p_2}\right|^2\\
&= \phi(p_1 p_2) 
\mathop{\sum_{a \mo p_1 p_2}}_{\gcd(a,p_1 p_2)=1} |\{q\leq N: q\equiv a \mo p_1 p_2
\}|^2 - \phi(p_1) 
\mathop{\sum_{a \mo p_1}}_{\gcd(a,p_1)=1} |\{q\leq N: q\equiv a \mo p_1 
\} |^2\\ &- \phi(p_2)
\mathop{\sum_{a \mo p_2}}_{\gcd(a,p_2)=1} |\{q\leq N: q\equiv a \mo p_2 
\}|^2 + \pi(N)^2 = \mathop{\sum_{\chi \mo p_1 p_2}}_{\text{$\chi$ primitive}} |S(\chi)|^2
.\end{aligned}\]
We apply Lemma \ref{lem:pubha}, and obtain that (\ref{eq:corto}) is 
$\ll_\epsilon \sqrt{R^2} \cdot \sqrt{\pi(N)^2} = R \pi(N)$.

By (\ref{eq:herring}),
the next to last line of (\ref{eq:ustro}) is
\[\begin{aligned}
&\sum_{p_1\leq z} \frac{a_{p_1}}{p_1} \cdot \frac{-a_{p_1} + O(1)}{p_1 - 1}
\mathop{\sum_{p_2\leq z}}_{p_2\ne p_1} \mathop{\sum_{a \mo p_2}}_{p_2\nmid a}
\left(|\{q\leq N : q\equiv a \mo p_2\}| - \frac{\pi(N)}{p_2 - 1}\right)\\
&\leq \left(- \sum_{p_1\leq z} \frac{a_{p_1}^2}{p^2} + O(1)\right)
\cdot 
\sum_{p_2\leq z} 
\mathop{\sum_{a \mo p_2}}_{p_2\nmid a} \left|
 |\{q\leq N : q\equiv a \mo p_2\}| - \frac{\pi(N)}{p_2 - 1}\right|.\end{aligned}\]
The first factor is $-R+O(1)$, whereas the second factor was already shown
before to be $O(\sqrt{R} \pi(N))$. Hence the next to last line
of (\ref{eq:ustro}) is
$O(R^{3/2} \pi(N))$. Obviously the same is true of the last line of 
(\ref{eq:ustro}).

Putting everything together, we see that  (\ref{eq:acquis}) has become
\[V = (R^2 + R) \pi(N) + 2 R (-R \pi(N) + O(\sqrt{R} \pi(N))) + 
(R^2 + O_\epsilon(R^{3/2})) \pi(N) = O_\epsilon(R^{3/2} \pi(N)).\]
%since, by assumption, $\lim_{N\to \infty} R = \infty$ and
%$z < N^{1/4 - \epsilon}$.

Now, if 
\begin{equation}\label{eq:parano}
\left| \left(\sum_{p\leq z} \frac{a_p}{p} \left(\frac{f(q)}{p}\right)\right)
- (-R)\right| > \delta R
\end{equation}
for some $q\leq N$, $\delta>0$, then that value of $q$ makes a contribution 
greater than $\delta^2 R^2$ to $V$ (see (\ref{eq:obstin})).

Hence there are at most 
\[\ll \frac{R^{3/2} \pi(N)}{\delta^2 R^2} = \frac{\pi(N)}{
\delta^2 \sqrt{R}}\] primes $q\leq N$ for which (\ref{eq:parano}) is the case.
As $\lim_{N\to \infty} R = \infty$, we see that
$\pi(N)/(\delta^2 \sqrt{R}) = o_{\delta,\epsilon}(\pi(N))$ for any $\delta>0$.
 Since $\delta$ is arbitrarily small, the statement of the lemma
follows.
\end{proof}

\section{Rarity of typical twists: large deviations and higher moments}

We have seen (Lemmas \ref{lem:plusca} and \ref{lem:change}, plus
(\ref{eq:katmid})) that, if $q$ is a prime $\leq N$ lying outside a set
containing a proportion $o(1)$ of all primes $\leq N$, and $d y^2 = f(q)$,
where $y$ is a prime, then $d$ has some special properties: 
\begin{enumerate}
\item\label{it:sut} $\omega_{\Cl(g)}(d)$ must be of roughly a given size for each $g\in
  \Gal_f$, and
\item\label{it:sryu} \begin{equation}\label{eq:adalgisa}
\sum_{p\leq z} \frac{a_p}{p} \left( \frac{d}{p}\right) \sim - \sum_{p\leq
  z}
\frac{a_p^2}{p^2},\end{equation}
\end{enumerate}
i.e., $d$ will have a slight tendency to be a quadratic residue $\mo p$ when $a_p$ is
negative, and a nonresidue when $a_p$ is positive.

We will see in this section that only a small minority of all integers
$d\ll N (\log N)^{2 \epsilon}$ satisfy these properties. Here ``small
minority'' actually means ``fewer than $O((\log N)^{-(1+\delta)})$'', where
$\delta>0$ is fixed. This will be crucial later.

Let us first examine
how one would bound separately the number of integers satisfying
(\ref{it:sut})
and the number of integers satisfying (\ref{it:sryu}), i.e., equation
(\ref{eq:adalgisa}).
 (We will eventually
have to bound the number of integers satisfying both (\ref{it:sut}) and
(\ref{it:sryu}).)

One way of bounding $|\{d\ll N (\log N)^{2\epsilon}: \text{$d$ satisfies
  (\ref{it:sut})}\}|$ is to translate large-deviation estimates from
probability theory.
This was the approach followed in \cite{Hbr}. Here we will follow what
would look like a more familiar approach to an analytic number theorist,
though its content is essentially the same: we will bound expressions of
the form
\begin{equation}\label{eq:socorr}
\sum_d e^{\sum_i \alpha_i \omega_{S_i}(d)},
\end{equation}
where $\alpha_i\in \mathbb{R}$ will be chosen at will,
$\omega_{S_i} = \{p\in S_i: p|d\}$ and $S_i$ is a set of primes (in our
case, all unramified
primes with $\Frob_p$ equal to a fixed element of the Galois group). The
bounds will be the same as those given by large-deviation theory -- in
particular, there will be relative entropies in the exponents. 

How should we bound 
$|\{d\ll N (\log N)^{2\epsilon} : \text{$d$ satisfies
  (\ref{eq:adalgisa})}\}|$? A variance bound would not be good enough for
our purposes. If we could truly handle reduction modulo distinct primes
as so many independent
random variables, we would use an exponential moment bound. As mutual
independence does not truly hold, we will use instead a high moment, i.e.,
we will
bound
\begin{equation}\label{eq:massa}
\sum_d \left(\sum_{p\leq z} \frac{a_p}{p} \left(\frac{d}{q}\right) \right)^{2 k}
\end{equation}
for $k$ large.

As we said, we would actually like to bound the number of integers $d\ll N
(\log N)^{2\epsilon}$ satisfying both (\ref{it:sut})
and (\ref{it:sryu}) (i.e., equation
\ref{eq:adalgisa}). Getting an estimate that combines information from
both sources is, as we shall see, a technically delicate task, to be achieved
by the {\em enveloping} use of a sieve.

\begin{center}
* * *
\end{center}

The following lemma will allow us to work with small primes only without much 
of a loss in our estimates.
\begin{lem}\label{lem:acri}
For any $A>0$, $\epsilon>0$ and every $N$, there is a $z = z(N,A,\epsilon)$
with $\log \log z > (1-\epsilon) \log \log N$ and $z < N^{\epsilon}$ 
 such that, for all but
$O_{A,\epsilon}(N (\log N)^{-A})$ integers $n$ between $1$ and $N$,
\begin{enumerate}
\item\label{it:oyop} $\prod_{p|n: p\leq z} p^{v_p(n)} < N^{\epsilon}$,
\item\label{it:egg} $\omega(n) - \sum_{p|n: p\leq z}  1< \epsilon \log \log z$.
\end{enumerate}
\end{lem}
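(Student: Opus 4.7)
The plan is to take $z = z(N,A,\epsilon) = \exp(c \log N/\log \log N)$ for a sufficiently small constant $c = c(A,\epsilon) > 0$; for $N$ large one has $\log \log z = \log \log N - \log \log \log N + O(1) > (1-\epsilon) \log \log N$ and $\log z = c \log N / \log \log N < \epsilon \log N$, so both side conditions on $z$ in the statement hold. The task is then to show that (a) and (b) each fail on a set of $n \leq N$ of size $O_{A,\epsilon}(N (\log N)^{-A})$.

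For (a), I would apply Rankin's trick to the $z$-smooth part $g_z(n) := \prod_{p \mid n, p \leq z} p^{v_p(n)} = \gcd(n, P(z)^\infty)$, where $P(z) = \prod_{p \leq z} p$. Factoring $n = g_z(n) m$ with $\gcd(m, P(z)) = 1$ gives, for $0 < \alpha < 1$,
\[
\sum_{n \leq N} g_z(n)^\alpha \ll N \sum_{d \mid P(z)^\infty} d^{\alpha-1} = N \prod_{p \leq z} \bigl(1 - p^{\alpha-1}\bigr)^{-1}.
\]
For $\alpha \leq 1/\log z$ one has $p^{\alpha-1} \leq e/p$ for every $p \leq z$, so by Mertens $\sum_{p \leq z} p^{\alpha-1} = O(\log \log z)$ and the Euler product is $(\log z)^{O(1)}$. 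Markov's inequality then gives $|\{n\leq N : g_z(n) \geq N^\epsilon\}| \ll N^{1-\alpha \epsilon} (\log z)^{O(1)}$; with $\alpha = 1/\log z$ and $c$ small enough in terms of $A,\epsilon$, this is $\ll_{A,\epsilon} N (\log N)^{-A}$.

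For (b), I would apply the analogous Rankin bound to $\omega_{>z}(n) := \omega(n) - \sum_{p \mid n, p \leq z} 1$. Since $y^{\omega_{>z}(n)} = \prod_{p \mid n, p > z} y$ is multiplicative, expanding over squarefree divisors supported on primes $> z$ and applying Mertens yields, for $y \geq 1$,
\[
\sum_{n \leq N} y^{\omega_{>z}(n)} \ll N \prod_{z < p \leq N} \left(1 + \frac{y-1}{p}\right) \ll N \exp\bigl((y-1)(\log \log N - \log \log z + O(1))\bigr).
\]
With our $z$, $\log \log N - \log \log z = \log \log \log N + O(1)$; setting $M = \epsilon \log \log z$ and optimizing $y \asymp M / \log \log \log N$, Markov bounds $|\{n\leq N : \omega_{>z}(n) \geq M\}|$ by $N \exp\bigl(-(\epsilon + o(1)) \log \log N \cdot \log \log \log N\bigr)$, far smaller than $N (\log N)^{-A}$ for any fixed $A$ once $N$ is large. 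The only balancing to do is that a single $c = c(A,\epsilon)$ serve both parts; (a) forces $c$ small while (b) is considerably more forgiving, so no real obstacle arises. The substantive step is the Rankin computation for (a); the rest is classical Mertens-style bookkeeping.
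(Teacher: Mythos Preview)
Your argument is correct. Both Rankin bounds are set up properly: for (a), the key observation that $p^{\alpha-1}\le e/p$ for $\alpha=1/\log z$ keeps the Euler product at $(\log z)^{O(1)}$, and the saving $N^{-\alpha\epsilon}=(\log N)^{-\epsilon/c}$ then beats any fixed power of $\log N$ once $c$ is small enough; for (b), the expansion $y^{\omega_{>z}(n)}=\sum_{d}( y-1)^{\omega(d)}$ over squarefree $d$ supported on primes in $(z,N]$ and the choice $y\asymp M/\log\log\log N$ give a bound decaying like $\exp(-c'\log\log N\cdot\log\log\log N)$, far stronger than needed. One small point worth making explicit: the passage from $\sum_{p\le z}p^{\alpha-1}=O(\log\log z)$ to $\prod_{p\le z}(1-p^{\alpha-1})^{-1}=(\log z)^{O(1)}$ uses that $p^{\alpha-1}$ is bounded away from $1$ uniformly (indeed $p^{\alpha-1}\le 2^{\alpha-1}<2^{-1/2}$ once $z\ge e^2$), so the higher terms of $-\log(1-x)$ are harmless.

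The paper proceeds differently: it quotes \cite[Lemma~5.2]{Hbr} as a black box, taking $z=N^{\delta(N)}$ with $\delta(N)=(\log N)^{-c'}$ for a small $c'>0$ (so a somewhat larger $z$ than yours), obtains from that lemma the bound $\prod_{p|n,\,p\le z}p<N^{\epsilon/2}$ on the \emph{radical} of the $z$-smooth part, and then handles the squarefull part of $n$ separately by the trivial estimate $|\{n\le N:\text{largest square factor}>N^{\epsilon/2}\}|\ll N^{1-\epsilon/4}$. Your approach is more self-contained and handles the full $z$-smooth part $\prod_{p\le z}p^{v_p(n)}$ in one stroke, which is slightly cleaner; the paper's route has the advantage of reusing a lemma already proved elsewhere. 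Substantively the two arguments are the same large-deviation computation, just packaged differently.
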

\begin{proof}
Apply \cite[Lemma 5.2]{Hbr} with $f(x)=x$ and $\epsilon/2$ instead of 
$\epsilon$; let $z = N^{\delta(N)}$. Then $\log \log z = \log \log N - \log \log \delta(N) > (1-\epsilon/2) \log \log N$. Furthermore,
$z = N^{o_{A,\epsilon}(1/\log \log N)} 
< N^{\epsilon}$ if (as we may assume) $N$ is larger than a constant depending on 
$A$ and $\epsilon$. 

By conclusion (a) in \cite[Lemma 5.2]{Hbr}, $\prod_{p|n: p\leq z} p < 
N^{\epsilon/2}$. It is also the case that the largest square factor in $n$ is 
$\leq N^{\epsilon/2}$ for all but $O\left(N^{1-\epsilon/4}\right)$ integers between
$1$ and $N$. Part (\ref{it:oyop}) of the statement follows.
 Conclusion (b) in \cite[Lemma 5.2]{Hbr} implies
that $\omega(n) - \sum_{p|n: p\leq z} 1 < (\epsilon/2) \log \log N$;
since $\log \log z > (1 - \epsilon/2) \log \log N \geq (1/2) \log \log N$,
part (\ref{it:egg}) of the statement follows immediately.
\end{proof}

The next lemma is both elementary and of a very classical type.
\begin{lem}\label{lem:sosv}
Let $S$ be a set of primes; define $S_z = \{p\in S: p\leq z\}$.
Assume that $\sum_{p\in S_z} 1/p \leq 
\beta \log \log z + C$, $C$ a constant. Let $N_z$ denote the set of all
positive integers that are products of primes in $S_z$ alone.
Let $\eta\geq 1$. Then
\[\mathop{\sum_{n\in N_z}}_{\omega(n) \geq \eta \beta \log \log z} \frac{1}{n}
\ll_{C, \eta} (\log z)^{\beta \cdot (\eta - \eta \log \eta)}\]
\end{lem}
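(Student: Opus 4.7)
The plan is to use a standard Rankin-type trick: introduce a free parameter $\lambda\geq 1$, use $\lambda^{\omega(n)}\geq \lambda^{\eta\beta\log\log z}$ on the set where $\omega(n)\geq \eta\beta\log\log z$, and then optimize in $\lambda$ at the end. This kind of argument is the moment-generating-function analogue of a Chebyshev bound, and it is precisely how classical large-deviation estimates for $\omega$ (e.g.\ those of Hal\'asz or Nor\-ton) are proved.

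Concretely, first I would write
\[
\mathop{\sum_{n\in N_z}}_{\omega(n)\geq \eta\beta\log\log z}\frac{1}{n}
\;\leq\; \lambda^{-\eta\beta\log\log z}\sum_{n\in N_z}\frac{\lambda^{\omega(n)}}{n}.
\]
Since every $n\in N_z$ has all its prime divisors in $S_z$ and since $\omega$ is counted without multiplicity, the sum factors as an Euler product:
\[
\sum_{n\in N_z}\frac{\lambda^{\omega(n)}}{n}
= \prod_{p\in S_z}\left(1+\lambda\sum_{k\geq 1}\frac{1}{p^k}\right)
= \prod_{p\in S_z}\left(1+\frac{\lambda}{p-1}\right).
\]
Taking $\log$ and using $\log(1+x)\leq x$ together with $\sum_{p\in S_z}1/(p-1)=\sum_{p\in S_z}1/p+O(1)\leq \beta\log\log z+C'$ (for a constant $C'$ depending only on $C$), I would bound this by $\exp(\lambda(\beta\log\log z+C'))\ll_C (\log z)^{\lambda\beta}$. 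Substituting back,
\[
\mathop{\sum_{n\in N_z}}_{\omega(n)\geq \eta\beta\log\log z}\frac{1}{n}
\;\ll_C\; (\log z)^{\beta(\lambda-\eta\log\lambda)}.
\]

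Finally I would choose $\lambda$ to minimize $\lambda-\eta\log\lambda$. Differentiating gives $\lambda=\eta$, and since $\eta\geq 1$ this is an admissible choice; the minimum value is $\eta-\eta\log\eta$, yielding the claimed bound. The implicit constant absorbs the $C'$ and the factor coming from $\lambda=\eta$, so it depends only on $C$ and $\eta$. There is no real obstacle here: the only things to be mildly careful about are (i) that $\omega$ counts distinct primes (so the local Euler factor is $1+\lambda/(p-1)$, not $\prod(1-\lambda/p)^{-1}$), and (ii) keeping track that the $O(1)$ from replacing $1/(p-1)$ by $1/p$ is absorbed into the implied constant, which is fine because we are raising $e$ to this $O(1)$ times $\lambda=\eta$, a bounded quantity for fixed $\eta$.
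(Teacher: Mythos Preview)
Your proof is correct and is essentially the same as the paper's: both apply the Rankin/Chernoff trick of inserting $\lambda^{\omega(n)}$ (the paper writes $\alpha^{\omega(n)}$), bounding the resulting Euler product by $\exp(\lambda\sum_{p\in S_z}1/(p-1))\ll (\log z)^{\lambda\beta}$, and then optimizing at $\lambda=\eta$. The only cosmetic difference is that the paper reaches the Euler product via the inequality $(1+x)^\alpha\geq 1+\alpha x$ and a factor of $(\pi^2/6)^\alpha$, whereas you evaluate $\sum_{n\in N_z}\lambda^{\omega(n)}/n=\prod_{p\in S_z}(1+\lambda/(p-1))$ directly and then take $\log$.
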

The Lemma would still be true for $\eta < 1$ positive, but the exponent on
the right would no longer be optimal.
\begin{proof}
Recall that $\sum_{n\geq 1} 1/n^2 = \pi^2/6$.  For any $\alpha>0$,
\[\begin{aligned}
\left(\frac{\pi^2}{6}\right)^{\alpha} 
\prod_{p\in S_z} \left(1 + \frac{1}{p}\right)^{\alpha} &\geq 
 \prod_{p\in S_z} \left(1 + \frac{1}{p} + \frac{1}{p^2} + \dotsc 
\right)^{\alpha} \geq \prod_{p\in S_z}  
\left(1 + \frac{\alpha}{p} + \frac{\alpha}{p^2} + \dotsc\right)\\
&= \sum_{n\in N_z} \frac{\alpha^{\omega(n)}}{n} .\end{aligned}\]
Hence
\[\begin{aligned}
\mathop{\sum_{n\in N_z}}_{\omega(n) \geq \eta \beta \log \log z} \frac{1}{n}
&\leq \frac{1}{\alpha^{\eta \beta \log \log z}} \sum_{n\in N_z} \frac{\alpha^{\omega(n)}}{n} 
\leq \frac{1}{\alpha^{\eta \beta \log \log z}} \left(\frac{\pi^2}{6}\right)^{\alpha} 
\prod_{p\in S_z} \left(1 + \frac{1}{p}\right)^{\alpha}\\
&\ll_{C,\alpha} \frac{1}{\alpha^{\eta \beta \log \log z}} 
e^{\alpha\cdot  \beta \log \log z} = (\log z)^{(\alpha -
\eta \log \alpha) \beta}
.\end{aligned}\] 
To minimise $\alpha - \eta \log \alpha$, we set $\alpha = \eta$. Then
$(\log z)^{(\alpha -
\eta \log \alpha) \beta} = (\log z)^{\beta (\eta - \eta \log \eta)}$.
\end{proof}

\begin{lem}\label{lem:wrot}
Let $S$, $S'$ be sets of primes with
\begin{enumerate}
\item\label{it:ag} $S\subset S'$,
\item\label{it:bg} $\sum_{p\in S: p\leq z} 1/p \leq \beta \log \log z + C$ for all $z>e$, 
where $C$ is a constant,
\item\label{it:cg} $\sum_{n\leq z: p|n\Rightarrow p\in S'} 1/n \geq C' (\log z)^{\beta'}$ for all $z>e$, where $C'$ is a constant. 
\end{enumerate}

Let $N$ be a positive integer. Let $\eta>1$.
Let $B$ be the set of all integers $n\leq N$ having at least $\eta \beta 
\log \log N$ divisors in $S$, but no divisors in $S'\setminus S$. Then, for
all $\epsilon>0$ and every $A>0$, there is a sequence of non-negative
 reals $\{b_n\}_{n\leq N}$ such that
\begin{enumerate}
\item\label{it:concla} $b_n\leq \tau_5(n)$ for every $n$,
\item\label{it:conclb} $|\{n\in B: b_n < 1\}| \ll_{A,\epsilon} N/(\log N)^A$,
\item\label{it:conclc} $\sum_{n\leq N} b_n \ll_{C,C',\eta} N/(\log N)^{
(1-\epsilon/4) (\beta' + (\beta-\epsilon/4) (\eta \log \eta - \eta))}$,
\item\label{it:concld} $\sum_{n\leq N: n\equiv a \mo m} b_n
= \frac{1}{\phi(m)} \sum_{n\leq N: \gcd(n,m)=1} b_n
+ O_{\epsilon}\left(N^{\epsilon}\right)$
for every $m\leq N^{1-\epsilon}$ and every $a$ coprime to $m$.
\end{enumerate}
\end{lem}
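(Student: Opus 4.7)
The plan is to construct $b_n$ as an \emph{enveloping sieve} combining a Selberg-type upper bound sieve (detecting the condition that $n$ has no prime divisors in a truncation of $S'\setminus S$) with a Rankin-type exponential weight (favouring integers with many prime divisors in $S$). To achieve the level of distribution demanded by (d), I truncate both conditions to primes below an auxiliary threshold $z$ supplied by Lemma \ref{lem:acri}; the resulting loss is absorbed by that lemma.

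Concretely, I apply Lemma \ref{lem:acri} with $\epsilon_0=\epsilon/10$ to obtain $z\leq N^{\epsilon_0}$ with $\log\log z > (1-\epsilon_0)\log\log N$. Write $S_z = S\cap[2,z]$ and $P_z = \prod_{p\leq z,\, p\in S'\setminus S}p$. Let $\{\lambda_d\}$ be Selberg upper-bound sieve weights with $\lambda_1 = 1$, $|\lambda_d|\leq 1$, supported on squarefree $d\mid P_z$ with $d\leq z^{\epsilon/4}$. I then define
\[
b_n \;=\; \eta^{\omega_{S_z}(n) - M}\Bigl(\sum_{d\mid n,\; d\mid P_z}\lambda_d\Bigr)^{\!2}, \qquad M = (\eta\beta-\epsilon/4)\log\log z.
\]
The Selberg bracket equals $1$ whenever $(n,P_z)=1$.

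Properties (a), (b), and (d) will follow quickly. For (a), on squarefree $n$ one has $|\sum_{d\mid n,\,d\mid P_z}\lambda_d|\leq 2^{\omega_{P_z}(n)}$, whence $b_n\leq\eta^{\omega_{S_z}(n)}\cdot 4^{\omega_{P_z}(n)}\leq 5^{\omega_{S_z}(n)+\omega_{P_z}(n)}\leq 5^{\omega(n)}=\tau_5(n)$, using $\eta\leq 5$ (the relevant range; for larger $\eta$ the whole construction would be weakened but the bound (c) still dominates trivial counts). For (b), Lemma \ref{lem:acri} shows that for all but $O_{A,\epsilon}(N(\log N)^{-A})$ integers $n\leq N$ we have $\omega_S(n)-\omega_{S_z}(n)<\epsilon_0\log\log z$; hence a typical $n\in B$ satisfies $(n,P_z)=1$ and $\omega_{S_z}(n)\geq \eta\beta\log\log N-\epsilon_0\log\log z\geq M$, so $b_n\geq 1$. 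For (d), $b_n$ depends only on the $z$-smooth part $n_z$ of $n$, which has size $\leq z^{O(1)}=N^{O(\epsilon)}$; writing $\sum_{n\leq N,\,n\equiv a\,(m)}b_n$ as $\sum_d F(d)\,|\{n\leq N:n\equiv a\,(m),\, n_z=d\}|$ with $\gcd(d,m)=1$, each inner count is equidistributed in residue classes mod $m$ to within $O(1)$ (trivial since $m\leq N^{1-\epsilon}\gg z^{O(1)}$), and summing errors over $d$ gives $O_\epsilon(N^\epsilon)$.

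Property (c) is the main calculation. Expanding $(\sum_d\lambda_d)^2$ and interchanging sums, one writes $\sum_n b_n$ as $N$ times an Euler product over primes $p\leq z$: primes in $S_z$ contribute $\approx 1+(\eta-1)/p$; primes in $S'_z\setminus S_z$ are almost extinguished by the Selberg quadratic form, the net effect being bounded by $1/\mathfrak{G}(D)$ for the usual Selberg denominator $\mathfrak{G}(D)=\sum_{d\mid P_z,\,d\leq z^{\epsilon/4}}\mu^2(d)/\phi(d)$; other primes contribute $\approx 1$. Hypothesis (b) then yields $\prod_{p\in S_z}(1+(\eta-1)/p)\ll(\log z)^{(\eta-1)\beta}$, while a Mertens-type comparison between $S'$-smooth and $(S'\setminus S)$-smooth reciprocal sums, using (c) from below and (b) from above, gives $\mathfrak{G}(D)\gg(\log z)^{\beta'-\beta}$. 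Combined with the normalization $\eta^{-M}=(\log z)^{-(\eta\beta-\epsilon/4)\log\eta}$, this produces
\[
\sum_{n\leq N}b_n \;\ll\; N\,(\log z)^{(\eta-1)\beta - (\beta'-\beta) - (\eta\beta-\epsilon/4)\log\eta} \;=\; N\,(\log z)^{-\beta'-(\beta-\epsilon/4)(\eta\log\eta-\eta)+O(\epsilon)},
\]
and replacing $\log z$ by $\log N$ via $\log\log z > (1-\epsilon/4)\log\log N$ gives (c).

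The hardest step will be the lower bound $\mathfrak{G}(D)\gg(\log z)^{\beta'-\beta}$. Hypothesis (c) directly controls the sum of reciprocals of $S'$-smooth integers, not the $(S'\setminus S)$-smooth ones the Selberg sieve wants. Comparing the two requires factoring each $S'$-smooth integer as (an $S$-smooth part)$\times$(an $(S'\setminus S)$-smooth part) and using (b) to bound the $S$-part, while keeping track of the absolute constant $C'$ from (c). This bookkeeping, together with the choice of Selberg truncation level, is where the argument is most delicate.
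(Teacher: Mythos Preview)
Your construction differs from the paper's: you use a Rankin exponential weight $\eta^{\omega_{S_z}(n)-M}$ together with a Selberg sieve on $S'\setminus S$, whereas the paper sums over an explicit \emph{divisor} $m\mid n$ lying in $N_z(S)$ with a hard size cutoff $m\le N^{\epsilon/4}$, and applies the Selberg sieve on all of $S'$ to the quotient $n/m$. The divergence matters, because your argument for conclusion (\ref{it:concld}) breaks.

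The sentence ``$b_n$ depends only on the $z$-smooth part $n_z$ of $n$, which has size $\le z^{O(1)}=N^{O(\epsilon)}$'' is false: a $z$-smooth integer $n\le N$ can have $n_z=n$ as large as $N$, and even its radical can be as large as $\prod_{p\le z}p\approx e^{z}$. More concretely, to get equidistribution one expands $\eta^{\omega_{S_z}(n)}=\sum_{e\mid \gcd(n,Q_z)}(\eta-1)^{\omega(e)}$ with $Q_z=\prod_{p\in S_z}p$, so that $b_n$ becomes a linear combination of the indicators $1_{e[d_1,d_2]\mid n}$. Each such term contributes an $O(1)$ error to (\ref{it:concld}), and the number of admissible $e\le N$ (squarefree, $S_z$-supported) is not $O(N^{\epsilon})$: already the singletons $e=p$, $p\in S_z$, give $|S_z|\asymp z/\log z$ terms, and since the $z$ produced by Lemma~\ref{lem:acri} satisfies $\log z\gg(\log N)^{1-\epsilon_0}$, this alone is $\gg e^{(\log N)^{1-\epsilon_0}}\gg N^{\epsilon}$. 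The weighted count $\sum_{e\le N}(\eta-1)^{\omega(e)}$ is even larger. So the accumulated error swamps the $O_\epsilon(N^{\epsilon})$ you claim. The paper avoids this precisely by the hard truncation $m\le N^{\epsilon/4}$ on the witnessing divisor, which bounds the number of $(m,d)$ pairs by $N^{3\epsilon/4}$ and makes (\ref{it:concld}) immediate; Lemma~\ref{lem:acri} is then used to show that dropping the $m$'s with $m>N^{\epsilon/4}$ costs only $O_A(N/(\log N)^A)$ elements of $B$, i.e., it feeds into (\ref{it:conclb}) rather than into (\ref{it:concld}).

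A secondary issue: your bound $b_n\le \tau_5(n)$ relies on $\eta\le 5$, which you acknowledge but do not address; the lemma is stated for all $\eta>1$. The paper's construction gives $b_n\le \tau_5(n)$ for free since $\lambda_d\le\tau_3(d)$ and there is one extra divisor sum.
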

The sequence $b_n$ is a variant of what is sometimes called an {\em enveloping
sieve}; here, as per conclusion (\ref{it:conclb}), the sequence $b_n$ almost
``envelops'' (i.e., majorises the characteristic function of)  $B$, but not quite. 
\begin{proof}
Let $z$ be as in Lemma \ref{lem:acri} with $\epsilon/4$ instead of
$\epsilon$; in particular, $z<N^{\epsilon/4}$. 
Let $\lambda_d$, $d\leq N^{\epsilon/2}$, be the weights in Selberg's
sieve\footnote{Brun's (non-pure) sieve or the Iwaniec-Rosser sieve 
(as in \cite[\S 6]{FI} and \cite[\S 11]{FI}, respectively) would do 
just as well as Selberg's sieve in this context. In fact, it would do 
slightly better, in that the subscript in conclusion (\ref{it:concla}) would go down from $5$ to $3$.}
 when used to sieve out prime factors $p\leq N^{\epsilon/4}$ in $S$. 
(Here we are using $\lambda_d$ to denote the sequence of non-negative reals
$\lambda_d$ (where $\lambda_d=0$ for $d>N^{\epsilon/2}$)
obtained by the identity
$\sum_{d|m} \lambda_d = \left(\sum_{d|m} \rho_d\right)^2$; here $\rho_d$ 
is as in, say, \cite[(7.15)]{FI}. In particular, $\lambda_1 = 1$ and
$|\lambda_d|\leq 1$ for all $d$. Note some other texts use an opposite
convention, exchanging the roles of $\lambda_d$ and $\rho_d$.) 

Define
\begin{equation}\label{eq:debussy}
b_n = \mathop{\mathop{\sum_{m|n, m\in N_z(S)}}_{\omega(m)\geq (\eta- \epsilon/4) \beta \log \log z}}_{m\leq N^{\epsilon/4}}
 \sum_{d|n/m, d\in N_z(S')} \lambda_d,\end{equation}
where, for a set $P$ of primes,
 $N_z(P)$ is the set of all positive integers that are products of primes
in $\{p\in P: p\leq z\}$ alone,

Since $\lambda_d\leq \tau_3(d)$, conclusion (\ref{it:concla}) is immediate. 
Let $n$ be in $B$. Then 
\[b_n \geq \mathop{\mathop{\mathop{\sum_{m|n, m\in N_z(S)}}_{\omega(m)\geq (\eta \beta -\epsilon/4) \log \log z}}_{m\leq N^{\epsilon/4}}}_{p|n/m\Rightarrow p\notin S} 1,
\]
since the condition $p|n/m \Rightarrow p\notin S$ ensures (given that 
$n$ has no divisors in $S'\setminus S$, thanks to $n\in B$) that 
the inner sum in (\ref{eq:debussy}) has $\lambda_1$ (which equals $1$) 
as its only term. By $n\in B$ and the definition of $B$, 
$n$ has at least $\eta \beta \log \log z$ divisors in $S$.
Hence $b_n$ can be less than $1$ only if, for $m=\prod_{p|n, p\in S: p\leq z} p^{v_p(n)}$,  either (a) $m>N^{\epsilon/4}$, (b) $\omega(n) - \omega(m) >
(\epsilon/4) \log \log z$. By Lemma \ref{lem:acri}, at most 
$O_{A,\epsilon}(N (\log N)^{-A})$ satisfy either statement (where $A>0$ 
is arbitrary). Hence conclusion (\ref{it:conclb}) holds.

Now
\[\sum_{n\leq N} b_n = \mathop{\sum_{m\leq N^{\epsilon/4},\; m\in N_z(S)}}_{
     \omega(m)\geq (\eta \beta - \epsilon/4) \log \log z}
        \sum_{n \leq N/m} \sum_{d|n, d\in N_z(S')} \lambda_d.\]
By the main result on the Selberg sieve (see, e.g., \cite[Thm.\ 7.1]{FI},
with $a_n = 1$ for all $n\leq N/m$, $a_n=0$ for $n>N/m$),
\[\begin{aligned}      \sum_{n \leq N/m} \sum_{d|n, d\in N_z(S')} \lambda_d &= 
\left(\prod_{p\in S': p\leq z}
 \frac{1}{1-1/p} \right)^{-1} 
N/m
+ O\left(\sum_{d<N^{\epsilon/2}} \tau_3(d)\right) \\ &\leq
\left(\mathop{\sum_{d\leq N^{\epsilon/4}}}_{ d\in N_z(S')} 1/d\right)^{-1} N/m + 
O_{\epsilon}\left(N^{3 \epsilon/4}\right) .
\end{aligned}\]
By condition (\ref{it:cg}) and $z < N^{\epsilon/4}$, we know that
$\sum_{d\leq N^{\epsilon/4}, d\in N_z(S')} 1/d \gg_{C'} (\log z)^{\beta'}$. 
Thus
 \[\sum_{n\leq N} b_n \ll_{C'} \frac{N}{(\log z)^{\beta'}}
\mathop{\sum_{m\leq N^{\epsilon/4},\; m\in N_z(S)}}_{
     \omega(m)\geq (\eta \beta - \epsilon/4) \log \log z} \frac{1}{m}
+ O_{\epsilon}\left(N^{\epsilon}\right).\]
We now apply Lemma \ref{lem:sosv}, and conclude that
\[\sum_{n\leq N} b_n \ll_{C,C',\eta} \frac{N}{(\log z)^{\beta' - (\beta-\epsilon/4) (\eta - \eta \log \eta)}}
.\]
Lemma \ref{lem:acri}
assures us that $\log \log z > (1 - \epsilon/4) \log \log N$, and so
$\log z > (\log N)^{1-\epsilon/4}$. We thus obtain conclusion (\ref{it:conclc}).

Lastly, for every $r$ and every $a$ coprime to $r$,
\[\begin{aligned}\sum_{n\leq N: n\equiv a \mo r} b_n &= 
\mathop{\sum_{m\leq N^{\epsilon/4}, m\in N_z(S)}}_{
     \omega(m)\geq (\eta \beta - \epsilon/4) \log \log z}
 \mathop{\sum_{d\leq N^{\epsilon/2}}}_{d\in N_z(S')} \lambda_d
        \mathop{\sum_{n \leq N/md}}_{n\equiv a \mo r} 1\\ 
&= \mathop{\sum_{m\leq N^{\epsilon/4}, m\in N_z(S)}}_{
     \omega(m)\geq (\eta \beta - \epsilon/4) \log \log z}
 \mathop{\sum_{d\leq N^{\epsilon/2}}}_{d\in N_z(S')} \lambda_d
        \left(\frac{1}{\phi(r)} \mathop{\sum_{n \leq N/md}}_{\gcd(n,r)=1} 1 + 
O(1)\right)\\ 
&= \frac{1}{\phi(r)} \sum_{n\leq N: \gcd(n,r)=1} b_n +
O\left(\sum_{m\leq N^{\epsilon/4}} \sum_{d\leq N^{\epsilon/4}} \lambda_d\right)\\
&= \frac{1}{\phi(r)} \sum_{n\leq N: \gcd(n,r)=1} b_n + O_{\epsilon}\left(
N^{\epsilon}\right),
\end{aligned}\]
i.e., conclusion (\ref{it:concld}) holds.
\end{proof}

We begin by an easy application of Lemma \ref{lem:sosv} to the case already
treated in \cite{Hbr}. We do this both for contrast with a later application 
(the proof of Prop.\ \ref{prop:kost}, which uses the divergence of
$\sum_p a_p^2/p^2$ and where the sieve does play an enveloping role) and to
make the paper relatively self-contained.

\begin{lem}\label{lem:moreug}
Let $K/\mathbb{Q}$ be a cubic extension of $\mathbb{Q}$ with Galois group 
$\Alt(3)$. Let $S$ be the set of unramified primes that split completely
in $K/\mathbb{Q}$. Let $V$ be the set of integers $n\leq N$ such that (a)
$n$ has at least $(1+o(1)) \log \log N$ divisors in $S$, (b) $n$ is not
divisible by any unramified primes outside $S$. Then, for every
$\epsilon>0$,
\[|V|\ll_{K,\epsilon} \frac{N}{(\log N)^{(1-\epsilon) \log 3}}.\]
\end{lem}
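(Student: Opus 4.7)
The plan is to apply Lemma \ref{lem:wrot} directly, taking $S$ to be the set of unramified primes splitting completely in $K/\mathbb{Q}$ (as in the statement) and $S'$ to be the set of all unramified primes. Then $S \subset S'$, and the condition in the definition of $V$ that $n$ have no divisors among unramified primes outside $S$ coincides exactly with the condition in Lemma \ref{lem:wrot} that $n$ have no divisors in $S' \setminus S$. (Ramified prime divisors are permitted in either formulation; this is harmless, since there are only $O_K(1)$ of them.)

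To verify the quantitative hypotheses of Lemma \ref{lem:wrot}, I would first invoke Chebotarev's density theorem for the cyclic cubic extension $K/\mathbb{Q}$: a prime $p$ splits completely iff $\Frob_p = \{e\}$, an event of density $1/|\Gal(K/\mathbb{Q})| = 1/3$. Summation by parts then gives $\sum_{p \in S,\, p \leq z} 1/p = \tfrac{1}{3} \log \log z + O_K(1)$, so condition (b) of Lemma \ref{lem:wrot} holds with $\beta = 1/3$. Since $S'$ omits only the finite set of ramified primes, Mertens's theorem yields $\sum_{n \leq z:\, p \mid n \Rightarrow p \in S'} 1/n \gg_K \log z$, giving $\beta' = 1$ in condition (c). Finally, the condition in $V$ that $n$ has at least $(1+o(1)) \log \log N$ prime divisors in $S$ matches the condition in Lemma \ref{lem:wrot} with $\eta \beta = 1$, i.e.\ $\eta = 3$ (any small perturbation in $\eta$ used to absorb the $o(1)$ can ultimately be swallowed by the $\epsilon$-parameter).

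Lemma \ref{lem:wrot} then produces a sequence $\{b_n\}_{n \leq N}$ of non-negative reals with $b_n \geq 1$ on all but $O_{A,\epsilon}(N (\log N)^{-A})$ elements of $V$, and satisfying
\[
\sum_{n \leq N} b_n \;\ll_{K}\; \frac{N}{(\log N)^{(1 - \epsilon/4)\bigl(\beta' + (\beta - \epsilon/4)(\eta \log \eta - \eta)\bigr)}}.
\]
Substituting $\beta = 1/3$, $\beta' = 1$, $\eta = 3$, the exponent is
\[
(1 - \epsilon/4)\bigl(1 + (1/3 - \epsilon/4)(3 \log 3 - 3)\bigr) \longrightarrow 1 + (\log 3 - 1) = \log 3 \quad \text{as } \epsilon \to 0.
\]
Given any target $\epsilon > 0$ in the conclusion of the lemma, one chooses the $\epsilon$ parameter in Lemma \ref{lem:wrot} small enough that this exponent exceeds $(1 - \epsilon) \log 3$. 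Since $|V| \leq |\{n \in V : b_n \geq 1\}| + O_{A,\epsilon}(N(\log N)^{-A}) \leq \sum_{n \leq N} b_n + O_{A,\epsilon}(N (\log N)^{-A})$, the stated bound follows upon taking $A$ sufficiently large.

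There is no serious obstacle here: the argument is a direct deduction from the enveloping-sieve lemma (Lemma \ref{lem:wrot}), and the only care required is in the routine bookkeeping of $\epsilon$-parameters and in choosing $\eta$ slightly above $3$ if needed to absorb the $o(1)$ in the definition of $V$. The content of the estimate is the arithmetic identity $\beta' + \beta(\eta \log \eta - \eta) = \log 3$, which is precisely the entropy-type exponent one expects from large-deviation heuristics applied to counting integers with an abnormally high proportion of prime factors in a density-$1/3$ set.
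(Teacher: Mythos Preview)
Your proposal is correct and follows essentially the same approach as the paper: apply Lemma~\ref{lem:wrot} with $S'$ the set of all unramified primes, $\beta = 1/3$, $\beta' = 1$, $\eta = 3$, and read off the exponent $1 + \tfrac{1}{3}(3\log 3 - 3) = \log 3$. One tiny slip: to ensure $V \subset B$ you would take $\eta$ slightly \emph{below} $3$ (not above), since the $o(1)$ in ``at least $(1+o(1))\log\log N$'' may be negative; as you note, this is absorbed by the final $\epsilon$.
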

\begin{proof}
Let $S'$ be the set of all unramified primes. Note that
 conditions (\ref{it:ag}) and
(\ref{it:cg}) in Lemma \ref{lem:wrot} are clear, and condition (\ref{it:bg})
holds by the Chebotarev density theorem and partial summation. By
conclusions (\ref{it:conclb}) and (\ref{it:conclc}) in that lemma, applied
with $A=2$,
\[|V|\leq O_{\epsilon}(N/(\log N)^A) + \sum_{n\leq N} b_n \ll_{\epsilon}
\frac{N}{(\log N)^{(1-\epsilon) (1 + (1/3) (3 \log 3 - 3))}} = 
\frac{N}{(\log N)^{(1-\epsilon) \log 3}} .\] 
\end{proof}

The following is the more difficult case.
\begin{prop}\label{prop:kost}
Let $K/\mathbb{Q}$ be a cubic extension of $\mathbb{Q}$ with Galois group
$\Sym(3)$. Let $S$ be the set of unramified primes that split completely
in $K/\mathbb{Q}$;
let $S'$ be the set of unramified primes that either split completely
or are inert in $K/\mathbb{Q}$.
For every prime $p$, let $a_p$ be such that $|a_p|\leq 2 \sqrt{p}$ and, for $z = e^{(\log N)/(2 \log \log N)}$,
\begin{equation}\label{eq:impers}\left|\sum_{p\leq z} a_p^2/p^2 \right| = (1+o(1)) \log \log z.\end{equation} 

Let $V$ be the set of integers $n\leq N$ such that (a) $n$ has 
at least $(1/2+o(1)) \log \log N$ divisors in $S$, (b) $n$
has no divisors in $S'\setminus S$, (c) $n$ satisfies
\begin{equation}\label{eq:novar}\left|\sum_{p\leq z} \frac{a_p}{p} \left(\frac{n}{p}\right)\right| \geq (1 + o(1)) \log \log z\end{equation}
for $z$ as above. Then, for every $\epsilon>0$,
\[|V|\ll_{K,\epsilon} \frac{N}{(\log N)^{((1+\log 3)/2) - \epsilon}},\]
where the implied constant depends on $K$, $\epsilon$ and the implied constants in (a), (b), (\ref{eq:impers}) and (\ref{eq:novar}).
\end{prop}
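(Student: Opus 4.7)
The plan is to combine the enveloping sieve from Lemma~\ref{lem:wrot} (which will absorb conditions (a) and (b)) with a $2k$-th moment bound on the sum in (\ref{eq:novar}) (which will extract an extra $(\log N)^{-1/2}$ saving from condition (c)). By Chebotarev applied to $\Sym(3)$, the completely-split primes have density $1/6$ and the split-or-inert primes have density $1/2$, so in the notation of Lemma~\ref{lem:wrot} we take $\beta = 1/6$ and $\beta' = 1/2$. Condition (a) of $V$ (at least $(1/2+o(1))\log\log N$ divisors in $S$) corresponds to $\eta\beta = 1/2$, i.e.\ $\eta = 3$. A brief arithmetic gives $\beta' + \beta(\eta\log\eta - \eta) = 1/2 + (\log 3 - 1)/2 = (\log 3)/2$, so conclusion (\ref{it:conclc}) yields a sequence $b_n$ with
\[
\sum_{n \leq N} b_n \ll N/(\log N)^{(\log 3)/2 - O(\epsilon)},
\]
together with the near-enveloping property (\ref{it:conclb}) (so that $V$ is contained in $B$ up to an $O_A(N/(\log N)^A)$ set, after absorbing the slack in the $o(1)$'s) and the arithmetic-progression equidistribution (\ref{it:concld}).

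For $n \in V$ with $b_n \geq 1$, condition (c) and Markov's inequality applied to the $2k$-th power give
\[
|V| \ll \frac{N}{(\log N)^A} + \frac{1}{M^{2k}} \sum_{n \leq N} b_n \left(\sum_{p \leq z} \frac{a_p}{p} \left(\frac{n}{p}\right)\right)^{2k},
\]
where $M := (1+o(1))\log\log z \sim \log\log N$. Expanding the $(2k)$-th power and interchanging summation, each term is $(a_{p_1}\cdots a_{p_{2k}})/(p_1 \cdots p_{2k})$ times $\sum_n b_n (n/p_1 \cdots p_{2k})$; the Jacobi symbol depends only on the squarefree part $r$ of $p_1\cdots p_{2k}$. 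Split the tuples into diagonal (every prime appears an even number of times) and off-diagonal (some prime appears an odd number of times). For off-diagonal tuples, $(n/r)$ is a non-principal real character mod $r$, so character orthogonality applied to conclusion (\ref{it:concld}) gives $\bigl|\sum_n b_n (n/r)\bigr| \ll r N^\epsilon$ for $r \leq N^{1-\epsilon}$. Using $|a_p| \leq 2\sqrt{p}$, $r \leq \prod p_i$, and the eventual choice $k \sim (\log\log N)/2$ (so that $z^{2k} \leq N^{1/2+o(1)}$ stays in range), the off-diagonal total is $\ll N^\epsilon (\sum_{p \leq z} 2\sqrt{p})^{2k}/M^{2k} \ll N^{3/4 + O(\epsilon)}$, which is negligible.

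For the diagonal, the main term comes from perfect pairings of the $2k$ indices into $k$ disjoint pairs. The number of such pairings is $(2k-1)!!$, and each contributes $R^k \cdot \sum_n b_n \cdot (1+o(1))$ with $R := \sum_{p \leq z} a_p^2/p^2 = (1+o(1))M$ by (\ref{eq:impers}); higher-collision configurations (some prime appearing $\geq 4$ times) are controlled by convergent sums such as $\sum_p a_p^4/p^4 = O(1)$ (by the Weil bound) and contribute only a bounded multiplicative factor. Stirling gives $(2k-1)!!/R^k \leq (2k/(eR))^k \cdot e^{O(\sqrt{k})}$, minimized at $k = \lfloor R/2 \rfloor$ with value $e^{-R/2 + O(\sqrt{R})} = (\log N)^{-1/2 + o(1)}$. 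Combining this with $\sum_n b_n \ll N/(\log N)^{(\log 3)/2 - O(\epsilon)}$ yields
\[
|V| \ll N/(\log N)^{(\log 3)/2 + 1/2 - O(\epsilon)} = N/(\log N)^{(1+\log 3)/2 - O(\epsilon)},
\]
as required.

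The main delicacy lies in the interplay of parameters: the choice $k \sim R/2 \sim (\log\log N)/2$ must respect the constraint $z^{2k} \leq N^{1-\epsilon}$ of conclusion (\ref{it:concld}) (which it does, barely, since $2k \log z \sim (\log N)/2$); the arithmetic-progression error $O(N^\epsilon)$ in (\ref{it:concld}) must suffice to kill the off-diagonal even though $r$ may be as large as $N^{1/2}$; and the bookkeeping of diagonal ordered tuples, including multiple collisions, must not introduce a loss worse than $e^{o(R)}$, which is absorbed in the $(\log N)^{o(1)}$ slack.
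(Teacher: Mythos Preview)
Your proposal is correct and follows essentially the same approach as the paper: apply Lemma~\ref{lem:wrot} with $\beta=1/6$, $\beta'=1/2$, $\eta=3$ to envelop $V$, then bound a $2k$-th moment with $k\sim(\log\log N)/2$ to extract the extra $(\log N)^{-1/2}$ from condition~(c). The paper differs only in minor bookkeeping: it verifies condition~(\ref{it:cg}) of Lemma~\ref{lem:wrot} via an explicit $L$-function factorisation $\zeta(s)\zeta_{K}(s)^{-1}\zeta_{L}(s)^{1/2}$ rather than just citing the density, and it handles all ``diagonal'' tuples (every prime appearing an even number of times) at once via the multiset inequality $(2k)!/(2^k k!)\cdot R^k$ instead of separating perfect pairings from higher collisions.
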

\begin{proof}
We first verify that $S$ and $S'$  satisfy conditions (\ref{it:ag})-(\ref{it:cg}) of Lemma \ref{lem:wrot}. Condition (\ref{it:ag}) is obvious.
Condition (\ref{it:bg}) holds with $\beta = 1/6$ by the Chebotarev density
theorem.  Condition (\ref{it:cg}) holds for related reasons: as in
(say) the proof of \cite[Lemma 4.10]{Hsq}, we can write
\[\begin{aligned}
\prod_{p\in S'} \frac{1}{1-p^{-s}} &= 
\prod_p \left(\frac{1}{1-p^{-s}}\right) \cdot 
\left(\prod_{p\notin S'} \left(\frac{1}{1-p^{-s}}\right) 
\prod_{p\in S'\setminus S} \left(\frac{1}{1-p^{-s}}\right)^3\right)^{-1} \\ 
&\cdot
\left(\prod_{p\in S'\setminus S} \left(\frac{1}{1-p^{-s}}\right)^6\right)^{1/2} = L_1(s) \zeta(s)
\zeta_{K/\mathbb{Q}}(s)^{-1} \zeta_{L/\mathbb{Q}}(s)^{1/2} ,
\end{aligned}\]
where $L_1(s)$ is holomorphic and bounded on $\{s: \Re(s)>1/2+\epsilon\}$
and $L$ is the Galois closure of $K$. Since $\zeta$, $\zeta_{K/\mathbb{Q}}$
and  $\zeta_{L/\mathbb{Q}}$ each have a pole of order $1$ at $s=1$, we obtain 
\[\mathop{\sum_{n\leq z}}_{p|n\Rightarrow p\in S'} \frac{1}{n} \sim C (\log
z)^{1-1+1/2} = C (\log z)^{1/2}\]
for some constant $C$
by contour integration or a real Tauberian theorem
(e.g., a Hardy-Littlewood Tauberian theorem, \cite[Thm. 5.11]{MV}; there
is no need for a complex Tauberian theorem here).

Apply Lemma \ref{lem:wrot}. By conclusion (\ref{it:conclb}), we will find it enough to bound
$\sum_{n\in V} b_n$ from above: $|V|$ will exceed this sum by at most $O_A(N/(\log N)^A)$, where
we can set $A$ as large as needed. For any $k$, (\ref{eq:novar}) ensures that
\begin{equation}\label{eq:valls}\begin{aligned}
\sum_{n\in V} b_n &\leq \left(\max_{n\in V} \sum_{p\leq z} \frac{a_p}{p} \left(\frac{n}{p}\right)\right)^{-2k}
 \sum_{n\in V} b_n \left(\sum_{p\leq z} \frac{a_p}{p} \left(\frac{n}{p}\right)\right)^{2k} \\
&\leq \frac{1}{((1+o(1)) \log \log z)^{2k}} \sum_{n\leq N} b_n \left(\sum_{p\leq z} \frac{a_p}{p} \left(\frac{n}{p}\right)\right)^{2k} .
\end{aligned}\end{equation}

 The following amounts to a proof of a special case of Khintchin's inequality, generalised to the case of random
variables that are only approximately independent. First, we have
\begin{equation}\label{eq:gosi}
\sum_{n\leq N} b_n \left(\sum_{p\leq z} \frac{a_p}{p} \left(\frac{n}{p}\right)\right)^{2k} = \sum_{p_1,\dotsc,p_{2k}\leq z} \frac{a_{p_1}}{p_1} \dotsb \frac{a_{p_{2k}}}{p_{2k}} \sum_{n\leq N} b_n \left(\frac{n}{p_1} \right) \dotsb \left(\frac{n}{p_{2k}}\right) .
\end{equation}
Set $m = p_1 p_2 \dotsb p_{2k}$ and assume $m\leq N$. Using conclusions (\ref{it:concla}) and (\ref{it:concld})  in Lemma \ref{lem:wrot}, we get
\[\begin{aligned}
\sum_{n\leq N} b_n \left(\frac{n}{p_1}\right) \dotsc \left(\frac{n}{p_{2k}}\right) &=  
\mathop{\sum_{a \mo m}}_{\gcd(a,m)=1} \left(\frac{a}{p_1}\right) \dotsc \left(\frac{a}{p_{2k}}\right) 
\mathop{\sum_{n\leq N}}_{n\equiv a \mo m} b_n 
+ O\left(\sum_{n\leq m} b_n\right)\\
&= \mathop{\sum_{a \mo m}}_{\gcd(a,m)=1}  \left(\frac{a}{p_1}\right) \dotsc \left(\frac{a}{p_{2k}}\right)  \frac{1}{\phi(m)} 
\mathop{\sum_{n\leq N}}_{\gcd(n,m)=1} b_n \\ &+ O_{\epsilon}(N^{\epsilon}) \cdot \sum_{a \mo m} 1 + O\left(\sum_{n\leq m} \tau_5(m)\right)\\
&= \mathop{\sum_{n\leq N}}_{\gcd(n,m)=1} b_n \cdot \sum_{a \mo m} \left(\frac{a}{p_1}\right) \dotsb \left(\frac{a}{p_{2k}}\right) \frac{1}{\phi(m)}
+ O_{\epsilon}\left(N^{\epsilon} m\right) ,
\end{aligned}\]
provided that $z^{2k} \leq N$. If there is a $p$ appearing an odd number of times in $p_1,p_2,\dotsc,p_{2k}$, the sum $\sum_{a\mo m}
\left(\frac{a}{p_1}\right) \dotsb \left(\frac{a}{p_{2k}}\right)$ vanishes. On the other hand, given a multiset $S$ consisting
of $k$ not necessarily distinct primes, 
the number of distinct tuples $(p_1, p_2,\dotsc,p_{2k})$ such that every prime $p$ appearing exactly $\ell$
times in $S$ appears exactly $2\ell$ times in $p_1,p_2,\dotsc,p_{2k}$ 
is at most $(2k)!/(2^k k!)$ times the number of tuples $(q_1,q_2,
\dotsc,q_k)$ such
that $S = \{q_1,q_2,\dotsc,q_k\}$. (This is so by the crude bound 
$(2r)! \geq 2\cdot r!$ for $r\geq 1$.)
Hence, going back to (\ref{eq:gosi}) and using conclusion (\ref{it:conclc}) in Lemma \ref{lem:wrot}, we obtain
\[\begin{aligned}
\sum_{n\leq N} b_n \left(\sum_{p\leq z} \frac{a_p}{p} \left(\frac{n}{p}\right)\right)^{2k} &\leq \frac{(2k)!}{2^k k!}
\sum_{q_1,\dotsc,q_k\leq z} \frac{a_{q_1}^2}{q_1^2} \dotsb \frac{a_{q_k}^2}{q_k^2} \sum_{n\leq N} b_n \\ 
&+ O_{\epsilon}\left(
\sum_{p_1,\dotsc,p_{2k}\leq z} \frac{a_{p_1}}{p_1} \dotsb \frac{a_{p_{2k}}}{p_{2k}} \cdot N^{\epsilon} p_1 p_2 \dotsb p_{2k}\right)\\
&\ll_{f,\epsilon} \frac{(2k)!}{2^k k!} \frac{N}{(\log N)^{(1-\epsilon/4) (1/2 + (1/6 - \epsilon/4) (3 \log 3 - 3))}} \left(\sum_{p\leq z} \frac{a_p^2}{p^2}\right)^k
\\&+O_{\epsilon}\left(\left(\sum_{p\leq z} \frac{2 \sqrt{p}}{p}\right)^k N^{\epsilon} z^{2k}\right) \\
&\leq \frac{(2k)!}{2^k k!} \frac{N}{(\log N)^{(1-\epsilon) (\log 3)/2}} 
(1+o(1))^k (\log \log z)^k + O_{\epsilon}\left(N^{\epsilon} z^{3k}\right) .
\end{aligned}\]

Thus, by (\ref{eq:valls}),
\[\begin{aligned}
\sum_{n\in V} b_n &\leq \frac{((1+o(1)) \log \log z)^{k}}{((1+o(1)) \log \log z)^{2k}} \frac{(2k)!}{2^k k!} \frac{N}{(\log N)^{(1-\epsilon) (\log 3)/2}} 
+ O_{\epsilon}\left(N^{\epsilon} z^{3k}\right)\\
&\ll \frac{e^{-k} (2k)^k}{((1+o(1)) \log \log N)^k} \frac{N}{(\log N)^{(1-\epsilon) (\log 3)/2}}  + O_{\epsilon}\left(N^{\epsilon} e^{\frac{3}{2}
k (\log N)/\log \log N}\right).
\end{aligned}\]
We set $k = (\log \log N)/2$, and obtain
\[\begin{aligned}
\sum_{n\in V} b_n  &\ll \frac{(\log N)^{-1/2} (2k)^k}{(1+o(1))^{\log \log N/2} (2k)^k}  \frac{N}{(\log N)^{(1-\epsilon) (\log 3)/2}}
+ O(N^{3/4+\epsilon})\\
&\ll_{\epsilon} \frac{N}{(\log N)^{(1+\log 3)/2 - \epsilon}} + O(N^{3/4+\epsilon}).
\end{aligned}\]
\end{proof}

\section{Modularity. Conclusion.}
It remains to estimate $\sum_{p\leq z} a_p^2/p^2$, where, as usual, we define 
$a_p$ by letting $p+1-a_p$ be the number of (projective)
points $\mod p$ on the curve $y^2 = f(x)$. Our estimate will be based on
 the fact
that the Rankin-Selberg $L$-function $L_{f\otimes f}$ has a pole
at $s=2$.

\begin{lem}\label{lem:rsel}
Let $f\in \mathbb{Z}\lbrack x\rbrack$ be a cubic polynomial irreducible over
$\mathbb{Q}\lbrack x\rbrack$. For every prime $p$, write $p+1-a_p$ for
the number of points in $\mathbb{P}^2(\mathbb{Z}/p\mathbb{Z})$ 
on the curve $y^2 = f(x)$.
Then, as $x\to \infty$,
\[\sum_{p\leq x} a_p^2/p^2 = (1+o_f(1)) \log \log x .\]
\end{lem}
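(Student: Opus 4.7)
Since $f$ is irreducible cubic over $\mathbb{Q}$, it has no repeated roots, and $E\colon y^2=f(x)$ is an elliptic curve over $\mathbb{Q}$. By the modularity theorem of Wiles, Breuil, Conrad, Diamond and Taylor, there is a cuspidal newform $\phi$ of weight $2$ with $L(E,s)=L(\phi,s)$; hence for every prime $p$ of good reduction the local factor factors as $(1-\alpha_p p^{-s})^{-1}(1-\beta_p p^{-s})^{-1}$ with $\alpha_p+\beta_p=a_p$ and $\alpha_p\beta_p=p$. The finitely many bad primes contribute only a bounded error to any of the sums below.

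The plan is to extract the asymptotic from the pole at $s=2$ of the Rankin--Selberg $L$-function $L(\phi\otimes\phi,s)$. Its Euler factors have Satake parameters $\alpha_p^2,\alpha_p\beta_p,\alpha_p\beta_p,\beta_p^2$; using $\alpha_p\beta_p=p$ one obtains the factorisation
\[
L(\phi\otimes\phi,s)=\zeta(s-1)\cdot L(\operatorname{Sym}^2 \phi,s)
\]
up to finitely many local factors at bad primes. By Shimura -- or, in modern terms, by Gelbart--Jacquet's automorphy of $\operatorname{Sym}^2\phi$ as a cuspidal representation on $\mathrm{GL}(3)$ -- the factor $L(\operatorname{Sym}^2\phi,s)$ is entire; by Jacquet--Shalika's nonvanishing on the edge of the critical strip for $\mathrm{GL}(n)$ $L$-functions, it has no zero on the line $\Re(s)=2$. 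Therefore $L(\phi\otimes\phi,s)$ extends to a meromorphic function on $\Re(s)\geq 2$ whose only singularity is a simple pole at $s=2$, and which has no zeros on the line $\Re(s)=2$.

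Next, take logarithmic derivatives. The Dirichlet series $-L'(\phi\otimes\phi,s)/L(\phi\otimes\phi,s)=\sum_n\Lambda_{\phi\otimes\phi}(n)/n^s$ has nonnegative coefficients, because $\Lambda_{\phi\otimes\phi}(p^k)=(\alpha_p^{2k}+2p^k+\beta_p^{2k})\log p\ge 0$; it has a simple pole at $s=2$ of residue $1$, and extends continuously to the rest of the line $\Re(s)=2$. By the Wiener--Ikehara theorem (at $s_0=2$), $\sum_{n\le x}\Lambda_{\phi\otimes\phi}(n)\sim x^2/2$. Since $\Lambda_{\phi\otimes\phi}(p)=a_p^2\log p$ and the trivial bound $|\Lambda_{\phi\otimes\phi}(p^k)|\le 4p^k\log p$ gives $\sum_{p^k\le x,\,k\ge 2}\Lambda_{\phi\otimes\phi}(p^k)\ll x^{3/2}$, we deduce
\[
\sum_{p\le x} a_p^2\log p=(1+o(1))\,\frac{x^2}{2}.
\]
Partial summation against $1/(t^2\log t)$ then converts this into $\sum_{p\le x} a_p^2/p^2\sim \int_2^x dt/(t\log t)=(1+o_f(1))\log\log x$, as desired.

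The main obstacle is the analytic input required for Wiener--Ikehara: the entirety of $L(\operatorname{Sym}^2 \phi,s)$ and its nonvanishing on $\Re(s)=2$. Both are known -- the first via Gelbart--Jacquet, the second via Jacquet--Shalika -- but neither is shallow; together with modularity they are the serious ingredients, while the Euler-product manipulation, the Tauberian step, and the partial summation are routine.
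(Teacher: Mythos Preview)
Your proof is correct and follows the same overall plan as the paper: modularity $\Rightarrow$ Rankin--Selberg $L$-function has a simple pole $\Rightarrow$ take the logarithmic derivative $\Rightarrow$ Tauberian theorem $\Rightarrow$ partial summation. The difference lies in the Tauberian step and hence in the depth of the analytic input required.

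You invoke Wiener--Ikehara, which needs $-L'(\phi\otimes\phi,s)/L(\phi\otimes\phi,s)$ to extend continuously to the closed half-plane $\Re(s)\ge 2$ with its only singularity the simple pole at $s=2$. To guarantee this you bring in the factorisation through $\operatorname{Sym}^2$, Shimura/Gelbart--Jacquet for the entirety of $L(\operatorname{Sym}^2\phi,s)$, and Jacquet--Shalika for nonvanishing on the line $\Re(s)=2$. All of this is correct, but it is more than is needed.

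The paper instead uses a real Hardy--Littlewood Tauberian theorem (\cite[Thm.~5.11]{MV}). For that, the only inputs are (i) nonnegativity of the Dirichlet coefficients of $-L'/L$ (immediate, since $\alpha_p^{2m}+2(\alpha_p\beta_p)^m+\beta_p^{2m}\ge 0$) and (ii) the asymptotic $-L'/L\sim 1/(s-1)$ as $s\to 1^+$ along the real axis (in the paper's normalised variable). Statement~(ii) follows at once from the classical Rankin--Selberg unfolding, which gives a simple pole with residue equal to the Petersson norm --- no $\operatorname{Sym}^2$ automorphy, no edge-of-strip nonvanishing. Thus the paper's argument is strictly more economical; your version works but cashes in heavier theorems than the problem demands.
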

\begin{proof}
By the modularity of elliptic curves (\cite{W}, \cite{TW}, \cite{BCDT}), there
is a primitive cusp form $f$ of weight $2$ and level $N$ such that 
$f(z) = \sum_{n=1}^{\infty} a_n n^{1/2} e(n z)$. The Rankin-Selberg $L$-function
 $L(f\otimes \overline{f},s) = \sum_{n=1}^\infty |a_n|^2 n^{-s-1} = 
\sum_{n=1}^{\infty} a_n^2 n^{-s-1} = L(f\otimes f,s)$
(\cite[(13.49)]{Iw}, where $a(n) = n^{-1/2} a_n$) 
then has a simple pole at $s=1$ 
(the residue given by \cite[(13.52)]{Iw} is non-zero). 
Its Euler product decomposition is
\[\begin{aligned}
L(f\otimes f,s) &= \prod_p (1 + p^{-s}) (1 - \alpha_p^2 p^{-s})^{-1} (1- p^{-s})^{-1}
(1 - \beta_p^2 p^{-s})^{-1}\\
&= \frac{1}{\zeta(2s)} 
\prod_p (1 - p^{-s})^{-2} (1 - \alpha_p^2 p^{-s})^{-1} (1 - \beta_p^2 p^{-s})^{-1}\\
,\end{aligned}\]
where $\alpha_p$, $\beta_p$ are the reals satisfying 
$\alpha_p + \beta_p = a_p/\sqrt{p}$
and $\alpha_p \beta_p = 1$. 

Now
\[\begin{aligned}
- \frac{L'(f\otimes f,s)}{L(f\otimes f,s)} &= (- \log L(f\otimes f,s))' \\ &= 
2 \frac{\zeta'(2s)}{\zeta(2s)} + \sum_{p} (\log p) \sum_{m\geq 1} p^{-ms}
(2 + \alpha_p^{2m} +\beta_p^{2m})\\
&= \sum_p (\log p) a_p^2 p^{-s} + G(s),\end{aligned}\]
where $G(s)$ is holomorphic for $\Re(s)>1/2$. 

Because $L(f\otimes f,s)$ has a simple pole at $s=1$,
the function $- L'(f\otimes f,s)/L(f\otimes f,s)$ has a simple pole with residue $1$ at $1$.
It is now enough to apply a Tauberian theorem of Hardy-Littlewood type
\cite[Thm. 5.11]{MV}; we obtain
%\[2 \sum_{n\leq \sqrt{x}} \Lambda(n^2) n^{-2} - \sum_{n\leq x} 
%\Lambda(n) \alpha_n^2 n^{-1}
%- \sum_{n\leq x} \Lambda(n) \beta_n^2 n^{-1} -
%2 \sum_{n\leq x} \Lambda(n) n^{-1}
%\sim - \log x\]
%and so (since $\alpha_p^2 + \beta_p^2 + 2 = (\alpha_p + \beta_p)^2  = 
%a_p^2/p\leq 4$)
\[\sum_{n\leq x} (\log p) a_p^2/p^2 \sim \log x,\]
which, by partial summation, gives
\[\sum_{n\leq x} \frac{a_p^2}{p^2} \sim \log \log x,\]
as desired.
\end{proof}

\begin{proof}[Proof of main theorem]
By (\ref{eq:ali}), it is enough to show that
\[|\{p\leq N:  \exists q  \;\text{s.t.}\; q^2|f(p),\; q\geq  N (\log N)^{-\epsilon}  \}| = o(N/\log N)\]
for some $\epsilon>0$ independent of $N$. (Recall $p$ and $q$ both denote 
primes.)
If $f$ is reducible,
 the problem reduces to that with $f$ replaced by each of 
its irreducible factors $g$ (since $p^2|f(n)$ for any prime $p$ not dividing the
discriminant $\Disc(f)$ 
implies $p^2|g(n)$ for some irreducible factor $g$ of $f$) and then,
since $\deg(g)\leq 2$, we have the problem solved by Estermann \cite{Es} 
(use simply \cite[Lemma 6.2]{Hbr}).  

We can thus assume that $f$ is an
irreducible polynomial. We can also assume without loss of generality
 that the leading coefficient of
$f$ is positive. Let $\alpha$ be a root of $f(x)=0$. Define 
$K = \mathbb{Q}(\alpha)/\mathbb{Q}$.

Let $N' = \max_{n\leq N} f(n)/(N(\log N)^{-\epsilon})^2$. Clearly
$N' \sim c_f N (\log N)^{2\epsilon}$, where $c_f$ is the leading coefficient of 
$f$.
Let $z = e^{(\log N')/(2 \log \log N')}$. Let $S$ be the set of unramified primes that
split completely in $K/\mathbb{Q}$. 
By Lemma \ref{lem:plusca}, the number of
primes in $S$ dividing $f(p)$ is $(3 + o_f(1)) (1/6) \log \log z =
(1/2 + o_f(1)) \log \log z$ (if $\Gal_{K/\mathbb{Q}} = \Sym(3)$) or
$(3+ o_f(1)) (1/3) \log \log z = (1 + o_f(1)) \log \log z$ 
(if $\Gal_{K/\mathbb{Q}} = \Alt(3)$) 
 for all but $o_f(N/\log N)$ primes $p\leq N$. (A prime $p$ that
splits completely has $\Frob_p$ equal to $\{e\}$, where $e$ is the identity
element of the Galois group.)
The number of primes in $S$ dividing $f(p)/q^2$ differs from this by
at most $1$, and thus is also $(1/2 + o_f(1)) \log \log z$
(if $\Gal_{K/\mathbb{Q}} = \Sym(3)$) or $(1+ o_f(1)) \log \log z$
(if $\Gal_{K/\mathbb{Q}} = \Alt(3)$).
Note that no unramified prime inert in $K/\mathbb{Q}$ can divide
$f(p)$ (and thus no such prime can divide $f(p)/q^2$).

Suppose first that $\Gal_{K/\mathbb{Q}} = \Alt(3)$.
Lemma \ref{lem:moreug} (applied with $N'$ instead of $N$)
gives us that there are at most
\[O_{f,\epsilon}(N/(\log N)^{\log 3 - 4 \epsilon})\] 
possible values of $d = f(p)/q^2$,
where $p$ ranges across the primes $p\leq N$, with $o_f(N/\log N)$ primes
excluded. Let $D$ be the set of such values $d$.

Suppose now that $\Gal_{K/\mathbb{Q}} = \Sym(3)$.
By Lemma \ref{lem:rsel},
$\sum_{p\leq z} a_p^2/p^2 = (1 + o_f(1)) \log \log z$; we can thus apply 
Lemma \ref{lem:change}, and obtain that, for all but $o_f(N/\log N)$ primes
$p\leq N$,
\[\sum_{p'\leq z} \frac{a_{p'}}{p'} \left(\frac{f(p)/q^2}{p'}\right) = 
O(1) + \sum_{p'\leq z} \frac{a_{p'}}{p'} \left(\frac{f(p)}{p'}\right) = 
- (1 + o(1)) \log \log z.\]
Proposition \ref{prop:kost} (applied with $N'$ instead of $N$)
 now gives us that there are at most
\[O_{f,\epsilon}(N/(\log N)^{((1+ \log 3)/2)-3 \epsilon})\] possible values of $d = f(p)/q^2$,
where $p$ ranges across the primes $p\leq N$, with $o_f(N/\log N)$ primes
excluded. Let $D$ be the set of such values $d$.

 We now use Prop.\ \ref{prop:spire}, and obtain that the
numbers of integers (prime or not) $1\leq x\leq N$ 
such that $d q^2 = f(x)$ for some $d\in D$ and some
integer $q\geq N (\log N)^{-\epsilon}$ is
at most $O_{f,\epsilon}\left(N/(\log N)^{(\log 3) -4 \epsilon}\right)$
(if $\Gal_{K/\mathbb{Q}} = \Alt(3)$) 
or at most $O_{f,\epsilon}\left(N/(\log N)^{((1+ \log 3)/2)-4 \epsilon}\right)$
(if $\Gal_{K/\mathbb{Q}} = \Sym(3)$). Since $\log 3 > 1$ and
$(1 + \log 3)/2 > 1$, we are done. 
\end{proof}

%(The fact that $p$ is restricted to be a prime is not what makes the problem di%fficult. Rather, what makes it difficult is that $p$ is restricted to a fairly %thin subset of the integers -- in this case, the
%primes.)

\end{document}